\title[ ]{Almost Mathieu operators  with  completely resonant phases}
\author{Wencai Liu}
\address[ Wencai Liu]{ Department of Mathematics, University of California, Irvine, California 92697-3875, USA}\email{liuwencai1226@gmail.com}
\theoremstyle{plain}
\newtheorem{theorem}{Theorem}[section]
\newtheorem{lemma}[theorem]{Lemma}
\theoremstyle{definition}
\newtheorem{definition}[theorem]{Definition}
\newtheorem{remark}[theorem]{Remark}
\begin{document}

%%% ----------------------------------------------------------------------

\begin{abstract}
Let $\alpha\in \mathbb{R}\backslash \mathbb{Q}$  and $\beta(\alpha) = \limsup _{n \to \infty}(\ln q_{n+1})/ q_n <\infty$, where $p_n/q_n$ is the continued
fraction approximations  to $\alpha$. Let  $ (H_{\lambda,\alpha,\theta}u) (n)=u(n+1)+u(n-1)+  2\lambda \cos2\pi(\theta+n\alpha)u(n)$ be
the  almost Mathieu operator on $\ell^2(\mathbb{Z})$, where $\lambda, \theta\in \mathbb{R}$.
 Avila and Jitomirskaya \cite{avila2009ten} conjectured that
   for  $2\theta\in \alpha \mathbb{Z}+\mathbb{Z}$,
$H_{\lambda,\alpha,\theta}$
satisfies Anderson localization if $|\lambda|>e^{2\beta(\alpha)}$.
 In this paper, we developed a method to treat simultaneous  frequency and phase resonances  and obtain that  for  $2\theta\in \alpha \mathbb{Z}+\mathbb{Z}$,
$H_{\lambda,\alpha,\theta}$
satisfies Anderson localization if $|\lambda|>e^{3\beta(\alpha)}$.

\end{abstract}
%%% ----------------------------------------------------------------------
\maketitle
%%% ----------------------------------------------------------------------
%%%%%%%%%%%%%%%%%%%%%%%%%%%%%%%%%%%%%%%%%%%%%%%%%%%%%%%%%%%%%%%%%%%%%%%%%%
%INTRODUCTION%%%%%%%%%%%%%%%%%%%%%%%%%%%%%%%%%%%%%%%%%%%%%%%%%%%%%%%%%%%%%
%%%%%%%%%%%%%%%%%%%%%%%%%%%%%%%%%%%%%%%%%%%%%%%%%%%%%%%%%%%%%%%%%%%%%%%%%%

\section{Introduction}
    The almost Mathieu operator (AMO) is the (discrete) quasi-periodic   Schr\"{o}dinger operator  on  $   \ell^2(\mathbb{Z})$:
 \begin{equation*}%\label{Def.AMO}
 (H_{\lambda,\alpha,\theta}u)(n)=u({n+1})+u({n-1})+ 2\lambda \cos2\pi(\theta+n\alpha)u(n),  % \text{ with }  v(\theta)=2\cos2\pi \theta,
 \end{equation*}
where $\lambda$ is the coupling, $\alpha $ is the frequency, and $\theta $ is the phase.
\par
The AMO is the most studied quasi-periodic Schr\"{o}dinger operator, arising naturally as a physical model.
We refer the readers to \cite{jitomirskaya2015dynamics,MR2145079}
  and the references therein  for    physical background. Most recently, there are a lot of interesting topics related to AMO, e.g. \cite{avila2016second,yang2017spectral,igor,mira,jl,jl1,shiwen}.

   We say phase $\theta\in \mathbb{R}$ is completely resonant with respect to frequency $\alpha$ if $2\theta\in \alpha \mathbb{Z}+\mathbb{Z}$. In this paper, we always assume
  $\alpha\in \mathbb{R}\backslash \mathbb{Q}$.
 \par
 {\bf Conjecture 1:} Avila and Jitomirskaya  \cite{avila2009ten,slov} assert    that  for   $2\theta\in \alpha \mathbb{Z}+\mathbb{Z}$, $H_{\lambda,\alpha,\theta}$ satisfies Anderson localization  if  $ |\lambda| >  e^{ 2 \beta}$,
where
 \begin{equation*}
  \beta= \beta(\alpha)=\limsup_{n\rightarrow\infty}\frac{\ln q_{n+1}}{q_n},
 \end{equation*}
 and  $ \frac{p_n}{q_n} $  is  the continued fraction  approximations    to $\alpha$.
 \par
Completely resonant phases   of   quasi-periodic operators  correspond to the rational  rotation numbers with respect to frequency in the Aubry dual model.  We refer the readers to \cite{gjls,jmgafa,chu89} for the Aubry duality.
The (quantitative) reducibility of Schr\"odinger cocycles with rational  rotation numbers is   related to many topics in   quasi-periodic operators.
For example, it  is  a  good  approach to  show that all the spectral gaps $G_m$ labeled by gap labeling theorem\footnote{The rotation number $\rho$ on gap $G_m$ satisfies $2\rho=m\alpha\mod \mathbb{Z}$.}  \cite{bellissard1985almost,johnson1982rotation} are open (named after dry Ten Martini Problem for the almost Mathieu operator). The dry  Ten Martini Problem \footnote{The dry Ten Martini Problem is still open for all parameters. The non-critical coupling case has been   solved by Avila-You-Zhou \cite{avila2016dry}.} is stronger  than Ten Martini Problem (the latter one was finally solved by Avila and Jitomirskaya \cite{avila2009ten}). It is also related to the H\"older continuity of Lyapunov exponents, rotation numbers and  the integrated density of states. %\cite{Amor,eli}.

The reducibility of the Schr\"odinger  cocycles with  rational  rotation  numbers  was first  established  by Moser  and P{\"o}schel \cite{moser}, who modified   the proof of reducibility of cocycles  with  Diophantine rotation numbers \cite{sinai}. See \cite{Amor,eli} for more precise  results.
It was first realized by Puig \cite{puig1,puig2} that localization at completely resonant phases leads to reducibility for Schr\"odinger cocycles with rational rotation numbers for the dual model. The argument was significantly developed in \cite{MR3338640,liu2017upper,han2017dry,avila2008almost}.
%Another  way to  construct  reducibility (almost reducibility) of the Schr\"odinger  cocycles is  by the localized
%   (almost localized)  eigen-solution of $H_{\lambda,\alpha,\theta}u=Eu$ for  the corresponding Schr\"odinger operator in the dual model with the completely resonant phases   \cite{MR3338640,liu2017upper,han2017dry,avila2008almost,puig1,puig2}.

 For completely resonant phases,
 Jitomirskaya-Koslover-Schulteis  \cite{jitomirskaya2005localization}
    proved  localization  for  $\alpha\in DC $\footnote{ We say $\alpha \in \mathbb{R}\backslash \mathbb{Q} $ satisfies Diophantine condition (DC)
if there exist $\tau>1, \kappa>0$ such that
$$ ||k\alpha||\geq\kappa |k|^{-\tau}  \text{ for   any } k\in \mathbb{Z}\setminus \{0\},$$
where $||x||={\rm dist}(x,\mathbb{Z})$.} via   a simple modification of the proof in \cite{jitomirskaya1999metal}.
Their result can be extended  to $\alpha$ with $\beta(\alpha)=0$ without any difficulty. In order to avoid too many concepts, if $\beta(\alpha)=0$,  we call $\alpha$
Diophantine. To the contrary, if  $\beta(\alpha)>0$, we call $\alpha$
Liouville.

Recently,
 there have been several remarkable sharp arithmetic  transition results for all parameters. In particular,   phase transitions   happen in  positive Lyapunov exponent  regime for Liouville frequencies \cite{AYZ,jl,JK,RJ,jlcpam}.
Later, universal (reflective) hierarchical structure of eigenfunctions was established in the
localization regime \cite{jl,jl1}  with  an arithmetic condition on $\theta$.
However,  all the sharp results aforementioned excluded the completely resonant phases.  The purpose of this paper is to   consider  the missing part.

   We  prove   Conjecture 1 for $|\lambda|>e^{3\beta}$.
   That is
   \begin{theorem}\label{MaintheoremAnderson}
Suppose  frequency  $\alpha\in  \mathbb{R}\backslash \mathbb{Q}$  satisfies $\beta (\alpha)<\infty$.  Then the   almost Mathieu operator
    $H_{\lambda,\alpha,\theta}$ satisfies Anderson localization  if $2\theta\in \alpha \mathbb{Z}+\mathbb{Z}$  and  $ |\lambda| > e^{3\beta(\alpha)}$.
    Moreover, if $\phi$ is an eigenfunction, that is $H_{\lambda,\alpha,
    \theta}\phi=E\phi$, we have
    \begin{equation*}
    \limsup_{k\to \infty} \frac{\ln (\phi^2(k)+\phi^2(k-1))}{2|k|} \leq -(\ln \lambda-3\beta).
\end{equation*}
\end{theorem}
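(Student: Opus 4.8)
The plan is to prove that every generalized eigenfunction of $H_{\lambda,\alpha,\theta}$ decays exponentially at the stated rate, which yields Anderson localization by Schnol's theorem. First I would exploit the completely resonant condition: since localization is invariant under the phase translation $\theta\mapsto\theta+\alpha$ and $2\theta\in\alpha\mathbb{Z}+\mathbb{Z}$, we may assume $2\theta\in\mathbb{Z}$, so the potential $v(n)=2\lambda\cos2\pi(\theta+n\alpha)$ satisfies $v(-n)=v(n)$ and $H_{\lambda,\alpha,\theta}$ commutes with the reflection $(Ru)(n)=u(-n)$. (Replacing $\lambda$ by $-\lambda$ via $\theta\mapsto\theta+\tfrac12$ we may also take $\lambda>0$, so $L:=\ln\lambda=\ln|\lambda|$.) Let $\phi$ solve $H_{\lambda,\alpha,\theta}\phi=E\phi$ with $|\phi(n)|\le C(1+|n|)$; the space of such solutions is at most two dimensional and $R$-invariant, so we may assume $\phi$ is even or odd and work symmetrically on $\mathbb{Z}_{\ge0}$. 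Since $\beta(\alpha)<\infty$ and $\lambda>1$, the Lyapunov exponent is $L(E)=L>0$ on the spectrum (Herman; Bourgain--Jitomirskaya), and $E$ lies in the spectrum.

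Next I would set up the usual block-resolvent machinery. For an interval $I=[x_1,x_2]$ with $E\notin\operatorname{spec}H_I$ (Dirichlet restriction), write $G_I=(H_I-E)^{-1}$; then
\[
\phi(x)=-G_I(x_1,x)\,\phi(x_1-1)-G_I(x_2,x)\,\phi(x_2+1),\qquad x\in I .
\]
Call $y$ $(\gamma,\ell)$-regular if some such $I\ni y$ has $|I|=\ell$, $\operatorname{dist}(y,\{x_1,x_2\})\ge\ell/5$, and $|G_I(y,x_i)|\le e^{-\gamma|y-x_i|}$ for $i=1,2$. By Cramer's rule $|G_I(x,y)|$ is a ratio of two sub-determinants $\det(H_J-E)$ (over the two components of $I\setminus\{y\}$) divided by $\det(H_I-E)$. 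The numerator I would bound from above by transfer-matrix norms together with Jitomirskaya's estimate on $\prod_{j\in J}|2\cos2\pi(\theta+j\alpha)|$, whose contribution near each resonant scale $q_k$ is at most $(\beta+o(1))q_k$, giving numerator $\le e^{(|I|-|x-y|)L+(\beta+o(1))|I|}$. For the denominator one regards $\det(H_I-E)$ as a trigonometric polynomial of degree $|I|$ in the phase and bounds it below after translating $I$ to a position whose ``center phase'' avoids a resonance (pigeonhole over translates).

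The crux --- and the step I expect to be the main obstacle --- is that at a completely resonant phase these two kinds of resonance cannot be separated. Because $2\theta\in\mathbb{Z}$, the center phase of $[x_1,x_2]$ is $\theta+\tfrac{x_1+x_2}{2}\alpha$ with $2\cdot(\text{center})\equiv(x_1+x_2)\alpha\ (\mathrm{mod}\ \mathbb{Z})$, so its resonances sit near $x_1+x_2\approx mq_k$, precisely where the universal frequency resonances of $\det(H_I-E)$ also live. Hence for $y$ near a multiple of $q_k$, any block long enough to see decay both straddles a frequency resonance and is phase-resonant, and its determinant can be as small as $e^{|I|L-(2\beta+o(1))|I|}$. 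To still obtain decay in this doubly resonant regime I would exploit the reflection symmetry established above --- so that the resonant ``partner'' of $\phi$ is $R\phi=\pm\phi$ itself rather than an independent competing solution --- place the block so that $y$ sits essentially at its center with the frequency and phase resonances balanced, and track the resulting cancellations. Combining the numerator loss ($\beta$) with the denominator loss ($2\beta$) and optimizing the block placement yields $(\gamma,\ell_y)$-regularity of every large $y$ with $\gamma=L-3\beta-\epsilon$ and $\ell_y$ comparable to $|y|$ in the non-resonant regime and to the next relevant return time in the resonant regime; this exponent is positive exactly when $|\lambda|>e^{3\beta}$.

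Finally I would upgrade pointwise regularity to global decay by the standard coupling-of-scales iteration: feeding the Poisson identity back into itself along a telescoping family of good blocks gives $|\phi(n)|\le e^{-(L-3\beta-\epsilon)|n|}$ for all large $|n|$ and every $\epsilon>0$. Thus $\phi\in\ell^2(\mathbb{Z})$ is a genuine eigenfunction with $\limsup_{|k|\to\infty}\frac{\ln(\phi^2(k)+\phi^2(k-1))}{2|k|}\le-(L-3\beta)=-(\ln\lambda-3\beta)$; since $E$ was an arbitrary generalized eigenvalue and eigenvalues form a countable set of full spectral measure, $H_{\lambda,\alpha,\theta}$ has pure point spectrum with exponentially localized eigenfunctions, i.e.\ Anderson localization with the claimed rate.
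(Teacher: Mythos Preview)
Your framework (Schnol, block resolvent, regularity, iteration) is right, but the crucial doubly-resonant step has a genuine gap, and your bookkeeping of where the $\beta$-losses arise is incorrect. There is \emph{no} $\beta$ loss in the numerator: $|P_k(\theta)|\le e^{(L+\varepsilon)k}$ holds uniformly in $\theta$ (upper semicontinuity of the Lyapunov exponent; this is Lemma~\ref{Le.Numerater} in the paper). All losses enter through the lower bound on the denominator $P_k$, and the constant $3$ does not come from any ``$\beta$ (numerator) $+$ $2\beta$ (denominator)'' arithmetic of the kind you describe. Your proposed mechanism in the resonant regime---reduce to $\phi$ even or odd and ``track the resulting cancellations'' in a centered block---is not concrete: the denominator $P_k(\theta+x_1\alpha)$ does not see $\phi$ at all, so parity of $\phi$ cannot improve it, and the paper in fact never uses the reflection symmetry of $\phi$. (Also, the reduction to $2\theta\in\mathbb{Z}$ is only possible when the integer coefficient of $\alpha$ in $2\theta$ is even; otherwise one is left with $\theta\in\{\tfrac{\alpha}{2},\tfrac{\alpha}{2}+\tfrac12\}$ and a reflection about $-\tfrac12$.)

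What actually produces the constant $3$ is quite different. First, for completely resonant phases the relevant resonant lattice is $q_n\mathbb{Z}+\tfrac{q_n}{2}\mathbb{Z}$, not just $q_n\mathbb{Z}$, so one must control both $r_j=\sup_{|r|\le10\eta}|\phi(jq_n+rq_n)|$ and $r_{j+1/2}=\sup_{|r|\le10\eta}|\phi(jq_n+\lfloor q_n/2\rfloor+rq_n)|$. Second, the denominator is bounded below not by a single pigeonhole over translates but via the Lagrange-interpolation identity for the polynomial $Q_k$, applied to a node set that is split into regions (an interval $I_1\subset[-q_n,-1]$ avoiding the complete resonance at $0$, and intervals near the target point) carrying \emph{different} a priori bounds on the interpolation term $La_i$: namely $\varepsilon q_n$, $(\beta+\varepsilon)q_n$, or $(2\beta+\varepsilon)q_n$ depending on the region. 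One then rules out $j_0\in I_1$ using $\phi(0)=1$. This region-dependent interpolation yields the coupled recursions
\[
r_{j+\frac12}\le e^{-\frac12(L-2\beta-C\eta)q_n}\max\{r_j,r_{j+1}\},\qquad
r_j\le\max_{t\in\{\pm\frac12,\pm1,\pm\frac32\}}e^{-(|t|L-\beta-C\eta)q_n}\,r_{j+t},
\]
and only after substituting the first into the second and iterating does the rate $L-3\beta$ emerge. Neither the half-integer resonance structure nor the region-dependent Lagrange bounds appear in your outline, and without them one cannot reach the constant $3$.
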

\begin{remark}

For $\alpha$ with $\beta(\alpha)=+\infty$, $H_{\lambda,\alpha,\theta}$ has purely singular continuous spectrum \cite{gordon1976point,simon1982almost} if $|\lambda|>1$.
\end{remark}

Now we  will discuss  the histories  of Conjecture 1 and  also our approach to the proof of Theorem \ref{MaintheoremAnderson}.
We state another related  conjecture first.
Define
\begin{equation*}
    \delta(\alpha,\theta)=\limsup_{n\to \infty}\frac{-\ln ||2\theta+n\alpha||}{|n|}.
\end{equation*}
{\bf Conjecture 2:} Jitomirskaya  \cite{Conjecture} conjectured  that
\begin{description}
 \item[2a](Diophantine phase) $H_{\lambda,\alpha,\theta}$ satisfies Anderson localization if $|\lambda|>e^{\beta(\alpha)}$  and $\delta(\alpha,\theta)=0$, and $H_{\lambda,\alpha,\theta}$
  has purely singular continuous spectrum for all $\theta$ if  $1<|\lambda|<e^{\beta(\alpha)}$.
  \item[2b] (Diophantine frequency)   Suppose $\beta(\alpha)=0$.  $H_{\lambda,\alpha,\theta}$ satisfies Anderson localization if $|\lambda|>e^{\delta(\alpha,\theta)}$ and
  has purely singular continuous spectrum if  $1<|\lambda|<e^{\delta(\alpha,\theta)}$.
\end{description}

Notice that $\beta(\alpha)=0$ for almost every $\alpha$, and  $\delta(\alpha,\theta)=0$ for almost every $\theta$ and fixed $\alpha$.

The  case   $\beta(\alpha)=0$ and $\delta(\alpha,\theta)=0$ of Conjecture 2 was solved by  Jitomirskaya in her   pioneering  paper \cite{jitomirskaya1999metal}.
 Avila and Jitomirskaya   \cite{avila2009ten} proved  the localization part for Diophantine phases in the regime $|\lambda|>e^{\frac{16}{9}\beta}$, which was a key step to solve the  Ten Martin Problem. Liu and Yuan followed  their proof  and extended  the  result to $|\lambda|>e^{\frac{3}{2}\beta} $\cite{MR3340177}.
      Liu and Yuan  \cite{liu2015anderson} further developed      Avila-Jitomirskaya's    technics  in \cite{avila2009ten}   and verified  the Conjecture 1 in regime $|\lambda|>e^{7\beta}$. Here,  $\frac{3}{2}$ and  $7$ are the limit of the method of  \cite{avila2009ten}.

      Recently, Avila-You-Zhou \cite{AYZ} proved the singular continuous spectrum part of 2a, as well as  the measure-theoretic  version of 2a$:H_{\lambda,\alpha,\theta}$ satisfies Anderson localization for   $ |\lambda| >   e^{  \beta}$ and  almost every $\theta$. See also \cite{JK}.
       Diophantine frequency (2b) and  localization part of  Diophantine phase (2a)  were proved by Jitomirskaya and Liu \cite{jl,jl1}, who developed Avila-Jitomirskaya's scheme and found a better way to  deal with the phase and frequency resonances.
        %For all $\theta$, $H_{\lambda,\alpha,\theta}$ has purely singular spectrum if $1<|\lambda|<e^{\beta}$ was proved by \cite{AYZ}.
%        We should mention that part of  was also   proved in \cite{AYZ,JK} by reducibility method.

 One of the ideas of    \cite{jl,jl1} is that they  treat   the values of the  generalized eigenfunction at resonant points as variables and obtain the localization via solving the
  equations  of resonant points, not just using  block expansion and the exponential  decay of the Green functions. We should mention that the Green's functions are not necessarily exponential decay  in \cite{jl,jl1} and also in the present paper.

We want to explain  the motivations for Conjectures 1 and 2, and also explain  the new challenge for completely resonant phases.
For Diophantine frequency $\beta(\alpha)=0$, the resonant points come from the phase resonances\footnote{Roughly speaking, if $||2\theta+k\alpha||$ is small, $k$ is called a  phase resonance.  }.
   For Diophantine phase $\delta(\alpha,\theta)=0$, the resonant points come from the  frequency resonances\footnote{Roughly speaking, if $||k\alpha||$ is small, $k$ is called a frequency resonance.  }.
   Phase resonances lead to reflective repetitions of potential \cite{js} and frequency resonances  lead to  repetitions of potential \cite{gordon1976point,simon1982almost}.
Indeed, all known proofs of localization, for example \cite{fsw,bg,bgs,bourgain2002absolutely}, are based, in one
way or another, on avoiding
resonances and removing resonance-producing parameters.
For AMO and $|\lambda|>1$, the Lyapunov exponent is $\ln |\lambda|$.
Conjecture 2 says that the competition between  the Anderson localization and the singular continuous spectrum is actually the competition between the Lyapunov exponent and the strength of the resonance.
2a says that without phase resonances,  if  the Lyapunov exponent beats  the frequency resonance, then Anderson localization follows. Otherwise, $H_{\lambda,\alpha,\theta}$  has purely singular continuous spectrum.
2b says that without frequency resonances,  if the  Lyapunov exponent beats  the phase resonance, then Anderson localization follows. Otherwise, $H_{\lambda,\alpha,\theta}$  has purely singular continuous spectrum.

For   completely resonant phases $2\theta\in \alpha\mathbb{Z}+\mathbb{Z}$, $\delta(\alpha,\theta)=\beta(\alpha)$.
 Thus phase resonances and frequency resonances happen at the same time.  %In each scale,  there are two resonances: $\delta(\alpha,\theta)=\beta(\alpha)$ and $\beta(\alpha)$.
 Conjecture 1 says that if   the Lyapunov exponent beats  the frequency  resonance  plus the  phase resonance, then the Anderson localization follows.
This  is the first challenge  in our paper since we need to deal with frequency  and phase resonances simultaneously.
The second challenge is to avoid the complete resonance.
In dealing with Conjecture 1, the original  arguments of Jitomirskaya \cite{jitomirskaya1999metal} do not work directly since there is the complete resonance.
In \cite{jitomirskaya2005localization}, Jitomirskaya-Koslover-Schulteis   found a trick to avoid the complete resonance by
  shrinking  the size of the interval around 0 (we refer it as ``shrinking scale'' technic). Later, the shrinking scale  technic was fully explored in \cite{liu2015anderson,MR3338640,han2017dry,avila2008almost}.
  It is a  natural idea to develop the shrinking scale technic and the localization arguments in  \cite{jl,jl1} to treat our situation.
 Since we shrink the scale,   there is one phase resonance {\it and} one
frequency phase resonance in a half scale. It is different from the situation in  Conjecture 2, where there is one phase resonance {\it or}  one frequency resonance in one scale.
Using full strength of   the localization proof of \cite{jl,jl1} to treat both  phase resonances and frequency resonances,
one  can only obtain the  Anderson localization for  $|\lambda|>e^{4\beta}$ in Conjecture 1, where  $4$ is the non-trivial technical limit in such approach.
%The third challenge in the present paper is to improve 4 to 3.
%In order to improve the $|\lambda|>e^{4\beta}$ to $|\lambda|>e^{3\beta}$.
  We bring several new ingredients that go beyond the technique of \cite{liu2015anderson,jitomirskaya2005localization,MR3338640,han2017dry,avila2008almost,jl,jl1} and allow us to improve the constant to 3, thus going well beyond the previous technical limit.
  In particular,
   %We need another innovation.  We significantly develop  the   idea of  estimating  Green functions.
  instead of  using    Lagrange interpolation uniformly, we treat Lagrange interpolation individually   during the process of  finding the  points without  ``small divisors''.  This    gives  us  significantly more varieties to  construct  Green functions.
  We believe our method has a wider applicability to Anderson localization.

 \section{ Some notations and   known facts  }
It is well known that in order to
    prove  Anderson localization of   $H_{\lambda,\alpha,\theta}$,  we only need  to show  the following statements \cite{berezanskii1968expansions}:
     assume $\phi$ is a generalized   function, i.e.,
  \begin{equation*}
    H\phi=E\phi  ,\text{ and }  |\phi(k)|\leq  1+|k|, \text { for some }E,
  \end{equation*}
  %where $\Sigma(H)$ is the spectrum of   self-adjoint   operator $H$.
  then there exists some constant $c>0$ such that
  \begin{equation*}%\label{G22}
    | \phi(k)|\leq C e^{-c|k|} \text{ for all } k.
  \end{equation*}

    It suffices to consider $ \alpha$ with  $ 0<\beta(\alpha) <\infty$.   Without loss of generality, we assume $\lambda>e^{3\beta}$, $\theta\in\{\frac{\alpha}{2},\frac{\alpha}{2}+\frac{1}{2},0, \frac{1}{2}\}$(shift is a unitary operator). In order to avoid too many notations,
 we still use  $2\theta\in \alpha\mathbb{Z}+\mathbb{Z}$  to represent  $\theta\in\{\frac{\alpha}{2},\frac{\alpha}{2}+\frac{1}{2},0, \frac{1}{2}\}$.
We also assume $E\in \Sigma_{\lambda,\alpha}$ (denote by  $\Sigma_{\lambda,\alpha}$ the spectrum of operator $H_{\lambda,\alpha,\theta}$ since the spectrum  does not depend on $\theta$).  For simplicity, we usually omit the dependence on parameters $E,\lambda,\alpha, \theta$.
  \par
  Given      a  generalized  eigenfunction $\phi$ of $H_{\lambda,\alpha,\theta}$, without loss of generality assume $\phi(0)=1$.
  Our objective is to show that there exists some specific $c>0$  such that
  $$| \phi(k)|\leq  e^{-c|k|} \text{ for } k\rightarrow \infty.$$
\par

Let us denote
$$ P_k(\theta)=\det(R_{[0,k-1]}(H_{\lambda,\alpha,\theta}-E) R_{[0,k-1]}).$$
%Following $ {\cite{J}}$,
It is easy to see that $P_k(\theta)$ is an even function of $ \theta+\frac{1}{2}(k-1)\alpha$  and can be written as a polynomial
      of degree $k$ in $\cos2\pi (\theta+\frac{1}{2}(k-1)\alpha )$ :

     \begin{equation}\label{GP_k}
      P_k(\theta)=\sum _{j=0}^{k}c_j\cos^j2\pi (\theta+\frac{1}{2}(k-1)\alpha)    \triangleq  Q_k(\cos2\pi  (\theta+\frac{1}{2}(k-1)\alpha)).
     \end{equation}
\begin{lemma} (\text{p}. 16, \cite{avila2009ten}) \label{Le.Numerater}
The following inequality  holds
    \begin{equation*}%\label{G25}
     \lim_{k\rightarrow\infty}\sup_{\theta\in \mathbb{R}} \frac{1}{k} \ln | P_k(\theta)|\leq \ln \lambda.
    \end{equation*}
\end{lemma}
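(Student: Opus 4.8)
The plan is to realise $P_k(\theta)$ as an entry of the $k$-step transfer matrix $A_k(\theta)$ of $H_{\lambda,\alpha,\theta}-E$, reduce the bound to one on the Lyapunov exponent of the almost Mathieu cocycle, and then upgrade the standard $L^1$-in-$\theta$ convergence of $\frac1k\log\|A_k\|$ to an estimate uniform in $\theta$. Recall that $A_k(\theta)=A(\theta+(k-1)\alpha)\cdots A(\theta+\alpha)A(\theta)$ with $A(\psi)=\left(\begin{smallmatrix}E-2\lambda\cos 2\pi\psi & -1\\ 1 & 0\end{smallmatrix}\right)$; expanding the tridiagonal determinant defining $P_k$ shows $|P_k(\theta)|=|(A_k(\theta))_{11}|\le\|A_k(\theta)\|$, so it suffices to bound $\limsup_{k\to\infty}\sup_{\theta\in\RR}u_k(\theta)$ where $u_k(\theta):=\frac1k\log\|A_k(\theta)\|$. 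Since $\det A_k\equiv1$ we have $u_k\ge0$, and since the entries of $A_k$ are trigonometric polynomials of degree $\le k$ in $\theta$, each $u_k$ extends to a $1$-periodic subharmonic function on $\mathbb C$ with $u_k\le\log C$ on $\RR$ for some $C=C(E,\lambda)$.

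Two facts are then used. First, submultiplicativity of the operator norm together with $A_{k+\ell}(\theta)=A_\ell(\theta+k\alpha)A_k(\theta)$ makes the sequence $k\mapsto k\int_0^1 u_k(\theta)\,d\theta$ subadditive, so by Fekete's lemma $\int_0^1 u_k(\theta)\,d\theta$ converges to the Lyapunov exponent $L(E)$; second, for $\lambda>1$ and $E\in\Sigma_{\lambda,\alpha}$ one has $L(E)=\log\lambda$ (the Aubry--Andr\'e formula, the theorem of Bourgain--Jitomirskaya, which also follows from Avila's global theory). To pass from the integral to the supremum, fix $m\ge1$, write $k=nm+r$ with $0\le r<m$, and group the product defining $A_k(\theta)$ into $n$ blocks of length $m$ and a remainder, obtaining $u_k(\theta)\le\frac{m\log C}{k}+\frac1n\sum_{j=0}^{n-1}u_m(\theta+jm\alpha)$. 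Since $m\alpha\notin\mathbb Q$, the orbit $\{\theta+jm\alpha\}_{j\ge0}$ equidistributes mod $1$ uniformly in $\theta$; applying Weyl's theorem to a continuous majorant of $u_m$ with the same mean (e.g.\ a mollification $u_m\ast\varphi_\varepsilon\ge u_m$) gives $\limsup_{n\to\infty}\sup_{\theta}\frac1n\sum_{j=0}^{n-1}u_m(\theta+jm\alpha)\le\int_0^1 u_m(t)\,dt$, and hence $\limsup_{k\to\infty}\sup_{\theta}u_k(\theta)\le\int_0^1 u_m(t)\,dt$ for every $m$. Letting $m\to\infty$ yields $\limsup_{k\to\infty}\sup_{\theta}u_k(\theta)\le L(E)=\log\lambda$, which is the assertion. (The reverse inequality is elementary: by \eqref{GP_k} the leading coefficient of $Q_k$ is $(2\lambda)^k$, so the top Fourier coefficient of $P_k$ has modulus $\lambda^k$ and $\sup_\theta|P_k(\theta)|\ge\lambda^k$; thus the limit in fact exists and equals $\log\lambda$.)

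The step I expect to be the main obstacle is the identification $L(E)=\log\lambda$ on the spectrum: this is a genuine theorem and is precisely what fails for $E\notin\Sigma_{\lambda,\alpha}$, so the standing assumption $E\in\Sigma_{\lambda,\alpha}$ is essential here; the rest is the routine combination of subadditivity with uniform equidistribution. The only other point needing care is that $u_m$ is merely subharmonic, hence upper semicontinuous and possibly equal to $-\infty$ somewhere, which is why one equidistributes a continuous majorant rather than $u_m$ itself; since the block estimate only ever bounds $u_k$ from above, this is harmless.
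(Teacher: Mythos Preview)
The paper does not supply its own proof of this lemma; it is quoted from \cite{avila2009ten}, so there is nothing in the present paper to compare against. Your argument is correct and is essentially the standard one (Furman's uniform upper bound for subadditive cocycles over a uniquely ergodic base, combined with the Bourgain--Jitomirskaya formula $L(E)=\log\lambda$ for $E\in\Sigma_{\lambda,\alpha}$), which is also how the cited reference obtains the estimate.

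One small simplification: since $\det A_m\equiv1$ forces $\|A_m(\theta)\|\ge1$, the function $u_m=\frac1m\log\|A_m\|$ is already continuous on $\mathbb{R}/\mathbb{Z}$, so Weyl equidistribution applies to it directly and the detour through a continuous majorant is unnecessary. This is fortunate, because a one-dimensional mollification $u\ast\varphi_\varepsilon$ of a function $u$ subharmonic on $\mathbb{C}$ is \emph{not} in general a pointwise majorant of $u$ on $\mathbb{R}$ (e.g.\ $u(z)=\log|z|$ restricted to large real $x$ is concave), so that specific device would have needed a different justification had it actually been required.
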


    \par
    By Cramer's rule (see p.15, \cite{bourgain2004green}for example)  for given  $x_1$ and $x_2=x_1+k-1$, with
     $ y\in I=[x_1,x_2] \subset \mathbb{Z}$,  one has
     \begin{eqnarray}
     % \nonumber to remove numbering (before each equation)
       |G_I(x_1,y)| &=&  \left| \frac{P_{x_2-y}(\theta+(y+1)\alpha)}{P_{k}(\theta+x_1\alpha)}\right|,\label{Cramer1}\\
       |G_I(y,x_2)| &=&\left|\frac{P_{y-x_1}(\theta+x_1\alpha)}{P_{k}(\theta+x_1\alpha)} \right|.\label{Cramer2}
     \end{eqnarray}
By Lemma \ref{Le.Numerater}, the numerators in  (\ref{Cramer1}) and (\ref{Cramer2}) can be bounded uniformly with respect to $\theta$. Namely,
for any $\varepsilon>0$,
\begin{equation}\label{Numerator}
    | P_n(\theta)|\leq e^{(\ln \lambda+\varepsilon)n}
\end{equation}
for large enough $n$.
\begin{definition}\label{Def.Regular}
Fix $t > 0$. A point $y\in\mathbb{Z}$ will be called $(t,k)$ regular if there exists an
interval $[x_1,x_2]$  containing $y$, where $x_2=x_1+k-1$, such that
\begin{equation*}
  | G_{[x_1,x_2]}(y,x_i)|\leq e^{-t|y-x_i|} \text{ and } |y-x_i|\geq \frac{1}{7} k \text{ for }i=1,2.
\end{equation*}
\end{definition}
It is  easy to check that (p. 61, \cite{bourgain2004green})
 \begin{equation}\label{Block}
   \phi(x)= -G_{[x_1 ,x_2]}(x_1,x ) \phi(x_1-1)-G_{[x_1 ,x_2]}(x,x_2) \phi(x_2+1),
 \end{equation}
 where  $ x\in I=[x_1,x_2] \subset \mathbb{Z}$.

      Given  a set $\{\theta_1, \cdots ,\theta_{k+1}\}$,  the lagrange Interpolation terms $La_i$, $i=1,2,\cdots,k+1$, are  defined by
      \begin{equation}\label{Def.Uniform}
       La_i= \ln \max_{ x\in[-1,1]} \prod_{ j=1 , j\neq i }^{k+1}\frac{|x-\cos2\pi\theta_j|}
        {|\cos2\pi\theta_i-\cos2\pi\theta_j|}.
      \end{equation}

     % Let $A_{k,r}=\{\theta\in\mathbb{R} \;|\;P_k(\cos2\pi  ( \theta -\frac{1}{2}(k-1)\alpha )  )|\leq e^{(k+1)r}\} $ with $k\in \mathbb{N}$ and $r>0$,
     %we have the following lemma.
        The following lemma is another form of  Lemma 9.3 in \cite{avila2009ten}.
      \begin{lemma}\label{Le.Uniform}
      Given a set $\{\theta_1, \cdots ,\theta_{k+1}\}$,   there exists some $\theta_i$ in set  $\{\theta_1, \cdots ,\theta_{k+1}\}$ such that

     \begin{equation*}
     P_{k}(\theta_i -\frac{k-1}{2}\alpha)\geq \frac{e^{k\ln\lambda-La_i}}{k+1}.
  \end{equation*}

      \end{lemma}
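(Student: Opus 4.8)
The plan is to exploit the fact that $Q_k$ is a polynomial of degree exactly $k$ whose top coefficient is explicitly known, together with the Lagrange interpolation formula on the $k+1$ nodes $\cos 2\pi\theta_j$. First I would establish (or recall) that the leading coefficient in \eqref{GP_k} is $c_k=(2\lambda)^k$. This follows by writing $P_k(\theta)=\det(R_{[0,k-1]}(H_{\lambda,\alpha,\theta}-E)R_{[0,k-1]})$ as a continuant: the only contribution to the top degree in $x=\cos 2\pi(\theta+\tfrac{k-1}{2}\alpha)$ comes from the product of diagonal entries, and within that from $\prod_{n=0}^{k-1}2\lambda\cos 2\pi(\theta+n\alpha)$. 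Pairing the index $n$ with $k-1-n$ turns this product into $(2\lambda)^k$ times a product of factors of the form $(x^2+\mathrm{const})$, with an extra factor $x$ when $k$ is odd, so the $x^k$-coefficient is $(2\lambda)^k$; every remaining term in the continuant carries a factor $-E$ or an off-diagonal entry and is therefore a polynomial in $x$ of degree at most $k-1$.

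Next, assume the $\cos 2\pi\theta_j$ are pairwise distinct (otherwise some $La_i=+\infty$ and the bound is vacuous once the left side is read as $|P_k|$, as in \cite{avila2009ten}), and write $x_j=\cos 2\pi\theta_j$. Since $Q_k$ has degree $k$, Lagrange interpolation at $x_1,\dots,x_{k+1}$ gives $Q_k(x)=\sum_{i=1}^{k+1}Q_k(x_i)\prod_{j\ne i}\frac{x-x_j}{x_i-x_j}$; comparing the coefficients of $x^k$ on both sides yields
\[
(2\lambda)^k=\sum_{i=1}^{k+1}\frac{Q_k(x_i)}{\prod_{j\ne i}(x_i-x_j)}.
\]
To convert the denominators into $e^{La_i}$, set $p_i(x)=\prod_{j\ne i}(x-x_j)$, a monic polynomial of degree $k$, so that $\max_{x\in[-1,1]}|p_i(x)|\ge 2^{1-k}$ by the extremal property of Chebyshev polynomials, while by \eqref{Def.Uniform} we have $e^{La_i}=\max_{x\in[-1,1]}|p_i(x)/p_i(x_i)|$. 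Hence $1/\prod_{j\ne i}|x_i-x_j|=1/|p_i(x_i)|\le 2^{k-1}e^{La_i}$. Plugging this into the displayed identity and using $Q_k(x_i)=P_k(\theta_i-\tfrac{k-1}{2}\alpha)$,
\[
(2\lambda)^k\le 2^{k-1}\sum_{i=1}^{k+1}e^{La_i}\,\bigl|P_k(\theta_i-\tfrac{k-1}{2}\alpha)\bigr|,
\]
so $\sum_{i=1}^{k+1}e^{La_i}|P_k(\theta_i-\tfrac{k-1}{2}\alpha)|\ge 2\lambda^k$, and by the pigeonhole principle some index $i$ satisfies $e^{La_i}|P_k(\theta_i-\tfrac{k-1}{2}\alpha)|\ge 2\lambda^k/(k+1)$, which gives the asserted inequality.

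The only real content beyond bookkeeping is the identification $c_k=(2\lambda)^k$: this is exactly what makes the right-hand side of the $x^k$-coefficient comparison as large as $\lambda^k$ rather than something harmless, so it is the step I would check most carefully — in particular verifying that the $-E$ and off-diagonal contributions genuinely lower the degree, and handling the parity of $k$ in the pairing argument. The Chebyshev lower bound $2^{1-k}$, the pigeonhole step, and the degenerate-node case are routine.
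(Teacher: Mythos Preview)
Your proof is correct and takes a genuinely different route from the paper's. The paper argues by contradiction: if every $|Q_k(\cos 2\pi\theta_i)|$ were too small, then the Lagrange representation would force $|Q_k(x)|<e^{k\ln\lambda}$ on $[-1,1]$, contradicting Herman's subharmonicity bound $\int_{\mathbb{R}/\mathbb{Z}}\ln|P_k(\theta)|\,d\theta\ge k\ln\lambda$. You instead extract the leading coefficient $c_k=(2\lambda)^k$ directly, use the divided-difference identity $c_k=\sum_i Q_k(x_i)/\prod_{j\ne i}(x_i-x_j)$, and control the denominators via the Chebyshev extremal bound $\max_{[-1,1]}|p_i|\ge 2^{1-k}$, then pigeonhole. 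Your computation of $c_k$ is right: with $z=e^{2\pi i y}$ and $y=\theta+\tfrac{k-1}{2}\alpha$, the only $z^{\pm k}$ contributions in $\prod_{n}2\lambda\cos2\pi(y+m_n\alpha)$ come from choosing all ``$+$'' or all ``$-$'' signs, and $\sum_n m_n=0$ kills the $\alpha$-dependence, so $z^k+z^{-k}=2T_k(x)$ gives leading coefficient $(2\lambda)^k\cdot 2\cdot 2^{k-1}/2^k=(2\lambda)^k$; the continuant's off-diagonal and $-E$ contributions drop the degree, as you say.

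What each approach buys: the paper's route is the one that generalizes, since Herman's inequality holds for arbitrary trigonometric polynomial potentials and does not require identifying $c_k$ by hand. Your route is fully elementary---no subharmonic/Jensen input---and even yields the slightly sharper constant $2\lambda^k/(k+1)$; the price is that it is tied to the specific cosine potential through the explicit leading-coefficient computation. Both silently assume the nodes $\cos 2\pi\theta_j$ are distinct; your remark that repeated nodes make some $La_i=+\infty$ and the bound $|P_k|\ge 0$ vacuous (reading the left side as $|P_k|$, as in \cite{avila2009ten}) is the correct disposal of the degenerate case.
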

      \begin{proof}

     Otherwise,   for all $i=1,2,\cdots,k+1$,

     \begin{equation*}
        Q_{k}(\cos2\pi \theta_i) =P_{k}(\theta_i -\frac{k-1}{2}\alpha)<\frac{e^{k\ln\lambda-La_i}}{k+1}.
  \end{equation*}
 By (\ref{GP_k}),  we can write the polynomial $Q_k(x)$ in the Lagrange
     interpolation form at points $ \cos2\pi  \theta_i$, $i=1,2,\cdots,k+1$. Thus
     \begin{eqnarray*}
     % \nonumber to remove numbering (before each equation)
         |Q_k(x)| &=& \left| \sum_{i=1}^{k+1} Q_k(\cos2\pi\theta_i)\frac{\prod_{j\neq i }(x-\cos2\pi\theta_{j})}{\prod_{j\neq i}(\cos2\pi\theta_i-\cos2\pi\theta_{j})}\right| \\
        &<&  (k+1)\frac{e^{k\ln\lambda-La_i}}{k+1}e^{La_i}=e^{k\ln\lambda}
     \end{eqnarray*}
     for all $x\in[-1,1]$. By (\ref{GP_k}) again, $|P_k(x)|< e^{k\ln \lambda}$ for  all $ x\in \mathbb{R}$.
    However,  by Herman's subharmonic function methods (see  p.16   \cite{bourgain2004green}), $ \int_{\mathbb{R}/ \mathbb{Z}} \ln|P_{k}(x)|dx\geq k\ln \lambda$.  This is impossible.
      \end{proof}

     Fix a sufficiently small constant $\eta $, which will be determined later. Let $b_n= \eta q_{n}$.
     For any  $y\neq 0$,
         we will distinguish between two cases:
         \par
        (i)   $\text{dist}(y,  q_n\mathbb{Z}+\frac{q_n}{2}\mathbb{Z} )\leq b_n$,  called $n$-resonance.
          \par
        (ii)     $\text{dist}(y, q_n\mathbb{Z}+\frac{q_n}{2}\mathbb{Z} )> b_n$, called  $n$-nonresonance.
\par

%For the  non-resonant $y$, Liu and Yuan have established  the regularity for $y$.
%Without loss of generality, we assume $\theta=0$ or $\theta=\frac{1}{2}$.

\begin{theorem}(\cite{liu2015anderson})\label{Th.Nonresonant}
Assume $2\theta\in \alpha \mathbb{Z}+\mathbb{Z}$ and  $  \lambda>1$.

Suppose  either

 i)  $b_n\leq |y|< Cb_{n+1}$ for some $C>1$ and $y$ is $n$-nonresonant

  or

 ii)$  |y|\leq Cq_n$ and ${\rm dist}(y, q_n\mathbb{Z}+\frac{q_n}{2}\mathbb{Z} )> b_n$.

Let  $n_0$ be the least positive integer such that $4q_{n-n_0}\leq   {\rm dist}(y,q_n\mathbb{Z}+\frac{q_n}{2}\mathbb{Z} ) -2$.  Let $s\in\mathbb{N} $  be the largest number   such that $4sq_{n-n_0} \leq {\rm dist}(y,q_n\mathbb{Z}+\frac{q_n}{2}\mathbb{Z} )-2$.
  Then for
any $\varepsilon>0$ and   sufficiently large $n$,
$ y$ is $  (\ln \lambda -\varepsilon,6sq_{n-n_0}-1)$ regular.
\end{theorem}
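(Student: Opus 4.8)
Fix $\varepsilon>0$ and put $K:=6sq_{n-n_0}-1$. The plan is to reduce $(\ln\lambda-\varepsilon,K)$-regularity of $y$, via Cramer's rule and the uniform numerator bound, to a single lower bound for a determinant; to obtain that bound at a small scale $\asymp q_{n-n_0}$ from the $n$-nonresonance of $y$ and the arithmetic of $\alpha$; and to propagate it to scale $K$ across the resonance-free window around $y$. By Definition~\ref{Def.Regular} it is enough to produce an interval $I=[x_1,x_2]\ni y$ with $x_2-x_1=K-1$, $|y-x_i|\ge K/7$, and $|G_I(y,x_i)|\le e^{-(\ln\lambda-\varepsilon)|y-x_i|}$ for $i=1,2$. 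By (\ref{Cramer1})--(\ref{Cramer2}) together with (\ref{Numerator}), this follows — with a bounded loss in $\varepsilon$, harmless since $\varepsilon$ is arbitrary — once $|P_K(\theta+x_1\alpha)|\ge e^{(\ln\lambda-\varepsilon)K}$: then the Cramer numerators are $\le e^{(\ln\lambda+\varepsilon)(K-1-|y-x_i|)}$, and because $|y-x_i|\ge K/7$ the quotient decays at rate close to $\ln\lambda$. So the whole task is to place $y$ near the centre of an interval whose left-endpoint phase makes $P_K$ nearly as large as Herman's bound permits.

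The engine is Lemma~\ref{Le.Uniform}, used first at a small scale: I would show that any integer $z$ with $\mathrm{dist}(z,q_n\mathbb{Z}+\tfrac{q_n}{2}\mathbb{Z})$ large is $(\ln\lambda-\varepsilon,c\,q_{n-n_0})$-regular for a fixed constant $c$, by applying Lemma~\ref{Le.Uniform} with $c\,q_{n-n_0}+1$ phases $\theta_i=\theta+x_1^{(i)}\alpha+\tfrac{cq_{n-n_0}-1}{2}\alpha$, the $x_1^{(i)}$ running over a block of $\approx c\,q_{n-n_0}$ consecutive integers adjacent to $z$. The lemma returns an index $i$ with $P_{cq_{n-n_0}}(\theta+x_1^{(i)}\alpha)\ge e^{cq_{n-n_0}\ln\lambda-La_i}/(c\,q_{n-n_0}+1)$, and one must check both that the associated interval sits in good position relative to $z$ and that $La_i\le\tfrac12\varepsilon\,c\,q_{n-n_0}$, i.e.\ that the $\cos2\pi\theta_i$ form a $\delta$-uniform subset of $[-1,1]$ with $\delta<\tfrac12\varepsilon$. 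The $\delta$-uniformity has two ingredients: first, that the small divisors $\|(x_1^{(i)}-x_1^{(j)})\alpha\|$ are not too small, which is the three-distance theorem for a rotation orbit of length $\asymp q_{n-n_0}$ (gaps of order $q_{n-n_0}^{-1}$); and second, that no two $\theta_i$ are nearly reflections of one another across $0$ or $\tfrac12$ modulo $1$, which — because $2\theta\in\alpha\mathbb{Z}+\mathbb{Z}$ — reduces to a lower bound $\|2\theta+m\alpha\|$ for integers $m$ of size $\lesssim q_{n-n_0}$, and this is bounded below by $\mathrm{dist}(z,q_n\mathbb{Z}+\tfrac{q_n}{2}\mathbb{Z})$. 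The definition of $n_0$ enters exactly here: it is the smallest scale with $\mathrm{dist}(y,q_n\mathbb{Z}+\tfrac{q_n}{2}\mathbb{Z})\ge 4q_{n-n_0}$, hence the smallest scale at which such a block fits with $y$ suitably inside it; and $s$ counts how many of these blocks fit, which is what pins down the final scale $6sq_{n-n_0}$.

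To finish, note that $n_0$ and $s$ are chosen so that the window $W:=[y-3sq_{n-n_0},\,y+3sq_{n-n_0}]$ contains no point of $q_n\mathbb{Z}+\tfrac{q_n}{2}\mathbb{Z}$; hence every integer of $W$ is at distance $\ge sq_{n-n_0}$ from that set, which for $\eta$ small is more than enough to invoke the previous paragraph and obtain a good block of length $\asymp q_{n-n_0}$ around it. Iterating the block identity (\ref{Block}) from $y$ outward — each step trades a boundary value for its own block representation, gaining a factor $e^{-(\ln\lambda-\varepsilon)\Theta(q_{n-n_0})}$ and advancing $\Theta(q_{n-n_0})$ — and stopping at endpoints $x_1,x_2$ at the prescribed distances gives $|G_I(y,x_i)|\le e^{-(\ln\lambda-\varepsilon)|y-x_i|}$ with $|y-x_i|\ge K/7$; as $\varepsilon>0$ was arbitrary, this is the claim. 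I expect the main obstacle to be the balancing act in the middle paragraph: the block must be wide enough to carry the $\approx c\,q_{n-n_0}$ interpolation nodes needed to force the determinant up to $e^{(\ln\lambda-\varepsilon)c\,q_{n-n_0}}$, narrow and well placed enough that the interval Lemma~\ref{Le.Uniform} selects is positioned around $z$, and arranged so that the nodes' cosine images do not cluster — using only the nonresonance of $z$ and the continued-fraction structure of $\alpha$ — and it is precisely for this that the minimal-$n_0$ definition is tailored, and where whatever slack hypotheses (i) and (ii) provide is consumed. A subordinate point is that the iteration of (\ref{Block}) must not erode the exponent below $\ln\lambda-\varepsilon$, which holds because $W$ is free of resonances.
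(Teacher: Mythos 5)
There are two genuine gaps in your architecture, and together they mean the argument as sketched would not close. First, the quantity you must control is the Green's function $G_{[x_1,x_2]}(y,x_i)$ of a \emph{single} interval of length exactly $6sq_{n-n_0}-1$ (that is what Definition \ref{Def.Regular} asks for), and by (\ref{Cramer1})--(\ref{Cramer2}) this requires a lower bound on the \emph{one} determinant $P_{6sq_{n-n_0}-1}(\theta+x_1\alpha)$. Your plan obtains largeness of determinants only at the small scale $\asymp q_{n-n_0}$ and then ``propagates'' by iterating (\ref{Block}); but (\ref{Block}) is an identity for the eigenfunction $\phi$, not for $G_I$ --- iterating it yields decay of $|\phi(y)|$ (this is exactly what the paper does later, in the proof of Lemma \ref{Le.resonant}), not $(\ln\lambda-\varepsilon,6sq_{n-n_0}-1)$-regularity of $y$. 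The paper instead applies Lemma \ref{Le.Uniform} once, directly to a set of $6sq_{n-n_0}$ phases, so that the selected $P_k$ is already at the final scale; the entire appendix computation ($\Sigma_\pm$, the block decomposition into $B_j$ and $T_\kappa$, Stirling) exists to verify $La_i\le \varepsilon s q_{n-n_0}$ for that large node set, and nothing in your sketch replaces it.

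Second, and independently: Lemma \ref{Le.Uniform} only guarantees that \emph{some} node $\theta_{j_0}$ in the chosen set gives a large determinant, and you have no control over which. With a single block of nodes adjacent to $z$, the interval $I$ centered at the selected $j_0$ need not contain $z$ with $|z-x_i|\ge k/7$; you flag this (``check that the associated interval sits in good position'') but supply no mechanism. The paper's mechanism is precisely the two-block construction $I_1\cup I_2$ with $I_1=[-2sq_{n-n_0},-1]$ anchored at the origin and $I_2$ of width $4sq_{n-n_0}$ centered at $y$ inside an interval of length $6sq_{n-n_0}$: if $j_0\in I_1$ the resulting Green's function decay at $0$ contradicts the normalization $\phi(0)=1$, forcing $j_0\in I_2$, and the $4s$ versus $6s$ width ratio then automatically places $y$ at distance $\ge sq_{n-n_0}\approx k/6$ from both endpoints. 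Your proposal drops the origin block entirely, so the positioning problem is unresolved. (A smaller point: your claim that the nonresonance of $z$ bounds $\|2\theta+m\alpha\|$ from below is the right idea, but in the paper this enters through (\ref{G314})--(\ref{G316new}) for the full $6sq_{n-n_0}$-point set, where one also needs $\|mq_n\alpha\|\le \Delta_{n-1}/2$ for the relevant multiples $m$ --- i.e.\ the restriction $|y|<Cb_{n+1}$ --- which your sketch never uses.)
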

%Since  the definitions of resonance and non-resonance here are a little bit different from those in Liu-Yuan \cite{liu2015anderson},
The proof of  Theorem \ref{Th.Nonresonant}  builds on the ideas used in the proof  of Lemma B.4 in \cite{jl}, which is original from \cite{avila2009ten}. However it requires  some modifications  to avoid the completely resonant phases. Thus we give the proof in the Appendix.

 The following lemma can  be proved directly  by block expansion and Theorem \ref{Th.Nonresonant}, which is similar to the proof of  Lemma 4.1 in  \cite{jl}.
 We also  give the proof in the Appendix.
\begin{lemma}\label{Le.resonant}
 Suppose $k\in [jq_n,(j+\frac{1}{2})q_n]$  or $k\in [(j+\frac{1}{2})q_n,(j+1)q_n]$  with $0\leq |j| \leq  C\frac{b_{n+1}}{q_n}+C$,  and  $  {\rm dist}(k, q_n\mathbb{Z}+\frac{q_n}{2}\mathbb{Z} )\geq     10\eta q_n$. Let  $d_t=|k-tq_n|$ for $t\in\{j,j+\frac{1}{2},j+1\}$.
Then for sufficiently large
$n $,
\begin{equation}\label{Intervalk}
    |\phi(k)|\leq\max\{ r_j \exp\{-(\ln \lambda- \eta)(d_j-3\eta q_n)\},r_{j+\frac{1}{2}}\exp\{-(\ln \lambda- \eta)(d_{j+\frac{1}{2}}-3\eta q_n)\}\},
\end{equation}
or
\begin{equation}\label{Intervalk1}
    |\phi(k)|\leq\max\{ r_{j+\frac{1}{2} }\exp\{-(\ln \lambda- \eta)(d_{j+\frac{1}{2}}-3\eta q_n)\},r_{j+1}\exp\{-(\ln \lambda- \eta)(d_{j+1}-3\eta q_n)\}\}.
\end{equation}

\end{lemma}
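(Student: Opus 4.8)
The plan is to apply the block-expansion formula (\ref{Block}) to the generalized eigenfunction $\phi$ on a carefully chosen interval containing $k$, and then iterate, peeling off resonant endpoints via the numbers $r_j, r_{j+1/2}, r_{j+1}$ (the maximal values of $|\phi|$ over small neighborhoods of the resonant points $jq_n$, $(j+\tfrac12)q_n$, $(j+1)q_n$), while controlling the remaining (nonresonant) endpoints through the regularity statement of Theorem \ref{Th.Nonresonant}. Concretely, suppose $k\in[jq_n,(j+\tfrac12)q_n]$, so the two ``nearest'' resonant points are $jq_n$ and $(j+\tfrac12)q_n$ at distances $d_j$ and $d_{j+1/2}$. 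First I would choose an interval $I=[x_1,x_2]\ni k$ of length comparable to $6sq_{n-n_0}$ (the scale furnished by Theorem \ref{Th.Nonresonant} applied to the point $k$, where $n_0,s$ are defined from ${\rm dist}(k,q_n\mathbb{Z}+\tfrac{q_n}{2}\mathbb{Z})\ge 10\eta q_n$) so that $k$ is $(\ln\lambda-\varepsilon,|I|)$ regular; then (\ref{Block}) gives $|\phi(k)|\le e^{-(\ln\lambda-\varepsilon)|k-x_1|}|\phi(x_1-1)|+e^{-(\ln\lambda-\varepsilon)|k-x_2|}|\phi(x_2+1)|$.

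The next step is a walk/telescoping argument: if $x_1-1$ (resp.\ $x_2+1$) is again $n$-nonresonant, repeat the regularity estimate there and continue; the geometric series of the prefactors converges because each step of length $\gtrsim q_{n-n_0}$ contributes a factor $e^{-c q_{n-n_0}}$, which dominates the at most polynomially-many ($|\phi(m)|\le 1+|m|$) growth. The iteration terminates when the walk reaches a point within $10\eta q_n$ of one of $jq_n$, $(j+\tfrac12)q_n$ (or exits the range of $j$), at which point one bounds $|\phi|$ there by the corresponding $r_t$. Summing the exponential weights accumulated along the walk from $k$ to that resonant point gives a total decay factor $\exp\{-(\ln\lambda-\varepsilon)(d_t - O(\eta q_n))\}$; absorbing all the slack from the $\tfrac17$-overlaps in Definition \ref{Def.Regular}, the finitely many iteration losses, and the final hop of length $\le 10\eta q_n$ into a single $3\eta q_n$ correction (and $\varepsilon$ into $\eta$) yields exactly (\ref{Intervalk}). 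The case $k\in[(j+\tfrac12)q_n,(j+1)q_n]$ is symmetric and produces (\ref{Intervalk1}).

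The main obstacle — and the reason the proof is deferred to the Appendix rather than being immediate — is bookkeeping the exponents cleanly: one must verify that the scale $6sq_{n-n_0}$ from Theorem \ref{Th.Nonresonant} is large enough that the walk cannot ``skip over'' the target resonant point or wander outside $[jq_n,(j+1)q_n]$ before terminating, yet small enough that the accumulated overlap losses stay within the $3\eta q_n$ budget; this requires the hypothesis $0\le|j|\le C b_{n+1}/q_n + C$ (so that one stays in the regime $|y|<Cb_{n+1}$ where Theorem \ref{Th.Nonresonant}(i) applies) together with the quantitative relation between $n_0$, $s$, and ${\rm dist}(k,q_n\mathbb{Z}+\tfrac{q_n}{2}\mathbb{Z})$. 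Once that is in place, the estimate follows exactly as in the proof of Lemma 4.1 of \cite{jl}, with the single modification that the shrinking-scale choice of $I$ keeps all intermediate intervals away from the completely resonant phase at $0$.
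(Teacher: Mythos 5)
Your overall strategy is the same as the paper's: apply Theorem \ref{Th.Nonresonant} to get $(\ln\lambda-\eta,\cdot)$-regularity of every point of $[jq_n+\eta q_n,(j+\frac12)q_n-\eta q_n]$, block-expand via \eqref{Block}, and iterate until the walk lands in a $10\eta q_n$-neighborhood of $jq_n$ or $(j+\frac12)q_n$, where $|\phi|$ is bounded by $r_j$ or $r_{j+\frac12}$, absorbing all overlap and step losses into the $3\eta q_n$ correction. That part matches the Appendix B argument.

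There is, however, one genuine gap: not every branch of the branching expansion \eqref{G331} terminates at a resonant neighborhood — a branch can oscillate inside the bulk of $[jq_n,(j+\frac12)q_n]$ indefinitely, so the iteration must be cut off (the paper stops after $\lfloor 4q_n/q_{n-n_0}\rfloor$ steps), leaving a residual term of the form $e^{-(\ln\lambda-\eta)\,2q_n}\sup_{p\in[jq_n,(j+\frac12)q_n]}|\phi(p)|$. You propose to control the surviving factor by the a priori polynomial bound $|\phi(m)|\le 1+|m|$. That does make the residual absolutely small, but it does not yield the stated conclusion, because \eqref{Intervalk} is \emph{multiplicative} in $r_j$ and $r_{j+\frac12}$, for which no a priori lower bound is available: for $|j|$ as large as $b_{n+1}/q_n\sim \eta e^{\beta q_n}/q_n$ the quantity $r_j$ may be far smaller than $e^{-2(\ln\lambda-\eta)q_n}(1+|j|q_n)$, and then the residual term would dominate the right-hand side of \eqref{Intervalk}. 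The paper closes this with an extra bootstrap: it first proves $\sup_{p\in[jq_n,(j+\frac12)q_n]}|\phi(p)|\le\max\{r_j,r_{j+\frac12}\}$ by taking a maximizer $p'$ in the bulk and applying the same capped expansion to $p'$, which forces $|\phi(p')|\le e^{-7(\ln\lambda-\eta)\eta q_n}\max\{r_j,r_{j+\frac12},|\phi(p')|\}$ and hence a contradiction unless the maximum is attained near the resonant points. With that bound in hand the residual term is dominated by the main terms. You need to add this self-improvement step (or an equivalent device) for the argument to be complete.
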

%\begin{remark}
%It is easy to see that
%$n_0$ emerging in Theorem \ref{Th.Nonresonant} only depends on $\eta$ and $n_0\leq C(\eta)$.
%\end{remark}
\section{ Proof of Theorem \ref{MaintheoremAnderson}}

We always assume $n$ is large enough and $C$ is a large constant below. Denote by $\lfloor x\rfloor$ the  largest integer less or equal  than  $x$.

  Let
  \begin{equation*}
  r_j= \sup_{|r|\leq 10\eta }|\phi(jq_n+rq_n)|,
 \end{equation*}
 and
 \begin{equation*}
   r_{j+\frac{1}{2}}= \sup_{|r|\leq 10\eta }|\phi(jq_n+\lfloor\frac{q_n}{2}\rfloor+rq_n)|.
 \end{equation*}

We prove a crucial theorem first.
\begin{theorem}\label{Th.resonant}
%Suppose $b_{n+1}\geq \frac{q_n}{4}$.
Let $   |\ell |\leq \frac{b_{n+1}}{q_n}+3$. Then except $r_0$, we have
%Suppose  $\frac{b_n}{4}\leq |k|\leq b_{n+1}$ is resonant and $|k|\geq \frac{q_n}{2}$, then
\begin{equation}
      r_{\ell}  \leq   \exp\{- (\ln \lambda-3\beta -C\eta)|\ell |q_n\},
 \end{equation}
and
\begin{equation}
      r_{\ell-\frac{1}{2}}  \leq   \exp\{- (\ln \lambda-3\beta -C\eta)|\ell-\frac{1}{2} |q_n\}.
 \end{equation}
\end{theorem}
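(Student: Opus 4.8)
The plan is to run an induction on $|\ell|$ (and half-integer shifts), using the key identity \eqref{Block} together with the regularity estimates of Theorem 2.7 and the averaging bound of Lemma 2.9. First I would set up the induction hypothesis: for all $\ell'$ with $|\ell'|<|\ell|$ (and the corresponding half-integers), the bound $r_{\ell'}\le \exp\{-(\ln\lambda-3\beta-C\eta)|\ell'|q_n\}$ holds, except possibly for $r_0$, which is only controlled by $r_0\le 1$ (recall $\phi(0)=1$, so one cannot hope for decay there). The base case is a neighborhood of $0$ where $r_0\le 1$ trivially, and the first genuinely resonant block beyond it. To pass from scale $(\ell-1)$ to scale $\ell$, take a point $k$ in the block around $\ell q_n$ (or $(\ell-\tfrac12)q_n$) and apply Lemma 2.9: this expresses $|\phi(k)|$ in terms of $r_{j},r_{j+1/2},r_{j+1}$ for the nearest integer/half-integer lattice points, with the exponential gain $\exp\{-(\ln\lambda-\eta)(d_t-3\eta q_n)\}$. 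Feeding in the inductive bounds on the neighbors that are strictly closer to $0$ gives decay; the only potential trouble is the neighbor that is $0$ itself (or, symmetrically, the neighbor one step further out than $\ell$), which must be handled separately.

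The heart of the matter — and where the claimed constant $3$ (rather than $4$) enters — is controlling $r_\ell$ when the block around $\ell q_n$ is adjacent to the resonant point $0$, i.e.\ $|\ell|=1$ or $|\ell|$ small, so that Lemma 2.9 only buys decay from one side. Here I would treat the values $\phi$ at the resonant points $0$, $\pm q_n$, $\pm q_n/2$, etc.\ as unknowns and use the determinant/Lagrange-interpolation machinery: build an interval $I$ of length $\approx |\ell|q_n$ around the block, write \eqref{Block} for the relevant points, and use Cramer's rule \eqref{Cramer1}–\eqref{Cramer2} together with Lemma 2.6 (numerator bound $e^{(\ln\lambda+\varepsilon)n}$) for the numerators. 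The denominator $P_K(\theta+x_1\alpha)$ must be bounded below, and this is exactly where Lemma 2.5 is used: choosing the base point $x_1$ of $I$ from a well-chosen family $\{\theta_1,\dots,\theta_{K+1}\}$ of translates by multiples of $q_n$, one gets $P_K\gtrsim e^{K\ln\lambda - La_i}$ for some good index $i$. The interpolation cost $La_i$ is where the resonances are paid for: each completely resonant phase near a point $jq_n$ with $\|2\theta+jq_n\alpha\|$ small (size $\approx e^{-\beta q_n}$ by the completely resonant assumption $2\theta\in\alpha\mathbb Z+\mathbb Z$ and $\|q_n\alpha\|\approx e^{-\beta q_n}$) contributes, and a point near $jq_n/2$ contributes a reflected copy. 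The new idea advertised in the introduction — treating the Lagrange interpolation term-by-term rather than bounding $La_i$ uniformly — is what I would deploy to show that the total cost over the interval of length $|\ell|q_n$ is at most $(3\beta+C\eta)|\ell|q_n$ instead of $(4\beta+C\eta)|\ell|q_n$: roughly, one counts that in a half-scale there is one phase resonance and one frequency resonance, but by distributing the interpolation points asymmetrically one avoids paying the full $\beta$ four times.

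Concretely the steps are: (1) fix the scale $n$, the small parameter $\eta$, and set up the induction on $|\ell|$; (2) for $k$ in a non-resonant sub-block, quote Theorem 2.7 and Lemma 2.9 to reduce $|\phi(k)|$ to the $r$'s at the neighboring resonant lattice points; (3) for the boundary case where a neighbor is $0$, construct an interval $I$ with a carefully chosen left endpoint $x_1=x_1(\ell)$, so that the associated $P_K$ has the largest possible lower bound via Lemma 2.5; (4) estimate $La_i$ by a term-by-term analysis of the resonances $\{jq_n\}$ and $\{jq_n/2\}$ falling inside $I$, using $\|2\theta+j q_n\alpha\|\gtrsim$ (something), to get the total cost $\le (3\beta+C\eta)|\ell|q_n$; (5) combine \eqref{Block}, Cramer's rule, the numerator bound from Lemma 2.6, and the denominator lower bound to close the induction. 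I expect step (4) to be the main obstacle: one must show the interpolation points can be chosen so that no two resonances ``collide'' expensively, and that the reflected (phase) resonances and the translated (frequency) resonances can be handled with overlapping, rather than additive, budgets. Steps (1)–(3) and (5) are, by contrast, adaptations of the block-expansion arguments of \cite{jl,jl1} and \cite{liu2015anderson}; the constant $C\eta$ absorbs all the harmless polynomial and $\eta q_n$ losses accumulated along the way.
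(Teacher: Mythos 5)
Your toolkit is the right one (block expansion \eqref{Block}, Cramer's rule \eqref{Cramer1}--\eqref{Cramer2}, Lemma \ref{Le.Uniform}, the non-resonant Theorem \ref{Th.Nonresonant} and Lemma \ref{Le.resonant}), and the overall shape --- iterate a local estimate over the resonant blocks --- matches the paper. But there are two genuine gaps. First, your induction step ``take $k$ in the block around $\ell q_n$ and apply Lemma \ref{Le.resonant}'' is not available: that lemma requires ${\rm dist}(k, q_n\mathbb{Z}+\frac{q_n}{2}\mathbb{Z})\geq 10\eta q_n$, so it says nothing about the quantities $r_\ell$, $r_{\ell-\frac{1}{2}}$ themselves, which are suprema over the resonant blocks. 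The Green's-function/Lagrange construction is therefore needed at \emph{every} resonant site $\ell q_n$ and $(\ell-\frac{1}{2})q_n$, not only ``when the block is adjacent to $0$''; and what it produces is not a bound coming from a single interval of length $|\ell|q_n$, but a local recursive inequality of the form $r_j\leq \max_t \exp\{-(|t|\ln\lambda-c_t\beta-C\eta)q_n\}r_{j+t}$ over nearest integer and half-integer neighbours, which is then iterated $2|\ell|$ times (using $|r_j|\leq(|j|+2)q_n$ to terminate).

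Second, and more seriously, the step you yourself flag as ``the main obstacle'' --- where the constant $3$ comes from --- is exactly the content of the theorem, and your heuristic (``overlapping rather than additive budgets'' over one long interval) is not the mechanism. The paper runs two coupled recursions. For half-integer sites it uses a two-block interpolation set: $I_1$ near $0$ with $La_m\leq\varepsilon q_n$ and $I_2$ near $(j+\frac{1}{2})q_n$ with $La_m\leq(\beta+\varepsilon)q_n$; the possibility that the good interpolation point lies in $I_1$ is excluded because it would force $|\phi(0)|<1$, and one gets $r_{j+\frac{1}{2}}\leq \exp\{-\frac{1}{2}(\ln\lambda-2\beta-C\eta)q_n\}\max\{r_j,r_{j+1}\}$ (Lemma \ref{Le.halfr_j}). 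For integer sites it uses a three-block set $J_1,J_2,J_3$ with $La_m\leq(\beta+\varepsilon)q_n$ on $J_2$ but $La_m\leq 2(\beta+\varepsilon)q_n$ on $J_1\cup J_3$, and a dichotomy on where the good point $j_0$ lands: $j_0\in J_2$ gives a cheap denominator but an interval whose relevant endpoints are reached only through $r_{j\pm\frac{1}{2}}$ or $r_{j\pm\frac{3}{2}}$, while $j_0\in J_3$ costs $2\beta q_n$ and reaches $r_{j\pm1}$ only via the half-integer estimate, which contributes another $\beta q_n$; the worst case $2\beta+\beta=3\beta$ per step of $q_n$ is precisely where the constant comes from (Lemma \ref{Le.r_j}). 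This case-by-case treatment of the Lagrange terms, together with the separate role of the half-integer unknowns $r_{j+\frac{1}{2}}$ in the recursive system, is the ingredient your plan still has to supply before step (5) can close.
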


\begin{lemma}\label{Le.halfr_j}
For any $ |  j |\leq  4\frac{b_{n+1}}{q_n}+16 $, the following holds,
\begin{equation*}
    r_{j+\frac{1}{2}}\leq \exp\{-\frac{1}{2}(\ln \lambda-2\beta- C\eta)q_n\} \max\{ r_{j},r_{j+1}\}.
\end{equation*}

\end{lemma}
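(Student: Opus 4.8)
The plan is to control each value $|\phi(k)|$ with $k$ in the window defining $r_{j+\frac12}$, i.e. each integer $k$ with $|k-(j+\tfrac12)q_n|\le 10\eta q_n+1$, and then take the supremum over these $k$. Fix such a $k$. The key idea is to apply the block expansion \eqref{Block} on an interval $I=[x_1,x_2]$ of length $q_n$ (one may perturb the length by $O(\eta q_n)$ if this is convenient for the determinant estimate below) whose endpoints lie inside the windows around the two neighbouring \emph{full} resonances $jq_n$ and $(j+1)q_n$: choose an integer $x_1$ with $|x_1-jq_n|\le 8\eta q_n$ and set $x_2=x_1+q_n-1$. Then $x_1-1$ lies within $10\eta q_n$ of $jq_n$ and $x_2+1=x_1+q_n$ lies within $10\eta q_n$ of $(j+1)q_n$, so by the definitions of $r_j$ and $r_{j+1}$ we have $|\phi(x_1-1)|\le r_j$ and $|\phi(x_2+1)|\le r_{j+1}$; moreover, since $\eta$ is small, $k\in I$ with $\min\{k-x_1,\,x_2-k\}\ge\tfrac12 q_n-20\eta q_n$.

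Given such an interval, \eqref{Block} yields $\phi(k)=-G_I(x_1,k)\phi(x_1-1)-G_I(k,x_2)\phi(x_2+1)$. I would estimate the two Green's function entries by Cramer's rule \eqref{Cramer1}--\eqref{Cramer2} together with the numerator bound \eqref{Numerator}, which gives
\[
|G_I(x_1,k)|\le\frac{e^{(\ln\lambda+\varepsilon)(x_2-k)}}{|P_{q_n}(\theta+x_1\alpha)|},\qquad
|G_I(k,x_2)|\le\frac{e^{(\ln\lambda+\varepsilon)(k-x_1)}}{|P_{q_n}(\theta+x_1\alpha)|},
\]
where both $x_2-k$ and $k-x_1$ are at most $\tfrac12 q_n+20\eta q_n$. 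Hence
\[
|\phi(k)|\le\frac{2\,e^{(\ln\lambda+\varepsilon)(\frac12 q_n+20\eta q_n)}}{|P_{q_n}(\theta+x_1\alpha)|}\,\max\{r_j,r_{j+1}\},
\]
and the lemma is reduced to producing, for the given $j$, an integer $x_1$ with $|x_1-jq_n|\le 8\eta q_n$ such that $|P_{q_n}(\theta+x_1\alpha)|\ge e^{(\ln\lambda-\beta-C'\eta)q_n}$. Indeed, with this bound in hand the exponent in the displayed estimate is at most $(\ln\lambda)\tfrac12 q_n-(\ln\lambda-\beta)q_n+C_1\eta q_n=-\tfrac12(\ln\lambda-2\beta-2C_1\eta)q_n$ (taking $\varepsilon<\eta$ and absorbing the factor $2$), which, after renaming the constant, is exactly the asserted estimate once we take the supremum over $k$.

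The remaining step, the determinant lower bound over a \emph{narrow} window of phases, is the main obstacle, and it is here that the shrinking-scale mechanism and the individualized handling of Lagrange interpolation enter. The plan is to apply Lemma \ref{Le.Uniform} to a set of $q_n+1$ phases built around the candidates $\theta_i=\theta+(i+\tfrac{q_n-1}{2})\alpha$ with $i$ ranging over a subset of the window $|i-jq_n|\le 8\eta q_n$, padded by auxiliary phases: Lemma \ref{Le.Uniform} then furnishes some member $\theta_i$ with $P_{q_n}(\theta_i-\tfrac{q_n-1}{2}\alpha)\ge e^{q_n\ln\lambda-La_i}/(q_n+1)$, and if that member is a candidate, then $x_1:=i$ works provided $La_i\le(\beta+C\eta)q_n$ (note $P_{q_n}(\theta_i-\tfrac{q_n-1}{2}\alpha)=P_{q_n}(\theta+i\alpha)$). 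The delicate point is controlling the interpolation weights $La_i$ from \eqref{Def.Uniform}: writing $|\cos2\pi\theta_i-\cos2\pi\theta_j|=2|\sin\pi(\theta_i+\theta_j)|\,|\sin\pi(\theta_i-\theta_j)|$, the factor $|\sin\pi(\theta_i-\theta_j)|$ is harmless since $\|(i-j)\alpha\|\ge\|q_{n-1}\alpha\|$ whenever $0<|i-j|<q_n$, whereas the factor $|\sin\pi(\theta_i+\theta_j)|$, which at a completely resonant phase (where $2\theta=\ell_0\alpha$ mod $\mathbb Z$ for a fixed integer $\ell_0$) equals $\pm|\sin\pi(\ell_0+i+j)\alpha|$, can be exponentially small for the ``complete resonance'' pairs with $\ell_0+i+j$ near a denominator $q_m$. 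The new ingredient is to select the interpolation nodes one by one, excluding (or slightly shifting) the indices that would pair into such near-zero denominators, so that every surviving candidate satisfies $La_i\le(\beta+C\eta)q_n$ while still leaving admissible $x_1$ inside the short window $|x_1-jq_n|\le 8\eta q_n$; keeping these two requirements compatible --- the excision of complete-resonance-producing indices against the limited room in the window --- is the heart of the matter, and it is precisely where treating Lagrange interpolation individually rather than on a fixed block is needed. Once the bound $|P_{q_n}(\theta+x_1\alpha)|\ge e^{(\ln\lambda-\beta-C'\eta)q_n}$ is secured, the chain of estimates above finishes the proof.
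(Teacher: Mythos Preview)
Your overall scheme differs from the paper's and contains a real gap in the determinant step. You want a length-$q_n$ interval with endpoints inside the windows around $jq_n$ and $(j+1)q_n$, which reduces the lemma to a lower bound $|P_{q_n}(\theta+x_1\alpha)|\ge e^{(\ln\lambda-\beta-C'\eta)q_n}$ for some $x_1$ in the narrow window $|x_1-jq_n|\le 8\eta q_n$. Two things break here. First, Lemma~\ref{Le.Uniform} only furnishes \emph{one} good node among the $q_n+1$ you feed it; you give no mechanism that forces this node to be a candidate rather than an auxiliary. The paper's device for this (put the auxiliaries near $0$ and contradict $\phi(0)=1$) requires the auxiliary block and the interval geometry to be tuned together, and you do not set this up. Second, and more seriously, your claim that the factor $|\sin\pi(\theta_i-\theta_j)|$ is harmless because $\|(i-j)\alpha\|\ge\|q_{n-1}\alpha\|$ for $0<|i-j|<q_n$ fails for your node set: with $q_n+1$ nodes, whether you place the auxiliaries near $0$ or pack everything into an interval of length $q_n$, some pair must satisfy $|i-j|\ge q_n$, and then $\|(i-j)\alpha\|$ can be as small as $\Delta_n\asymp e^{-\beta q_n}$. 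This frequency resonance adds another $\beta q_n$ to $La_i$, so at best $La_i\le(2\beta+C\eta)q_n$, not $(\beta+C\eta)q_n$; plugging through your chain of inequalities yields only $r_{j+\frac12}\le e^{-\frac12(\ln\lambda-4\beta-C\eta)q_n}\max\{r_j,r_{j+1}\}$, weaker than the stated lemma.

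The paper avoids both issues by a different construction. It works with polynomial degree $k\approx\tfrac23 q_n<q_n$ (so no pair of nodes has $|i-j|\ge q_n$ and the frequency factor really is harmless), and it splits the $k+1$ nodes between an interval $I_1=[-2sq_{n-n_0},-1]$ near $0$ and an interval $I_2$ centered at the half-resonance $jq_n+\lfloor q_n/2\rfloor$. Claim~1 then gives $La_m\le\varepsilon q_n$ on $I_1$ and $La_m\le(\beta+\varepsilon)q_n$ on $I_2$; if the winning $j_0$ lies in $I_1$ one contradicts $\phi(0)=1$, so $j_0\in I_2$. The resulting Green's-function interval $I$ is centered at $j_0$ with length $\approx\tfrac23 q_n$, and its endpoints need not fall in the full-resonance windows; instead the paper invokes Lemma~\ref{Le.resonant} at the endpoints to pass to $r_j,r_{j+\frac12},r_{j+1}$, and then rules out the $r_{j+\frac12}$ term on the right because its coefficient $e^{-2sq_{n-n_0}\ln\lambda+\beta q_n+C\eta q_n}$ is $<1$ (this is where $\ln\lambda>3\beta$ is used). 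So the paper trades your clean endpoint placement for a shorter interval that keeps $La_i$ at the $\beta q_n$ level, and handles the endpoint location afterwards via Lemma~\ref{Le.resonant}.
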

\begin{proof}
Take  $\phi(j q_n+\lfloor\frac{q_n}{2}\rfloor+rq_n)$ with $|r|\leq 10\eta $ into consideration.
Without loss of generality assume $j\geq0$.
Let $n_0$ be the least positive integer such that
\begin{equation*}
 \frac{1}{\eta}  q_{n-n_0}\leq    (\frac{1}{6}-2\eta) q_n.
\end{equation*}
  Let $s$ be the
largest positive integer such that $sq_{n-n_0}\leq (\frac{1}{6}-2\eta) q_n $. Then
\begin{equation*}
    s\geq \frac{1}{\eta} .
\end{equation*}
By the fact $(s+1)q_{n-n_0}\geq (\frac{1}{6}-2\eta)q_n $,
 one has
 \begin{equation}\label{sq}
  (\frac{1}{6}-3\eta)q_n \leq sq_{n-n_0}\leq (\frac{1}{6}-2\eta)q_n.
 \end{equation}

%For simplicity, we assume $n_0=1$.
Set $I_1, I_2\subset \mathbb{Z}$ as follows
\begin{eqnarray*}
% \nonumber to remove numbering (before each equation)
  I_1 &=& [-2sq_{n-n_0},-1], \\
   I_2 &=& [ j q_n+\lfloor\frac{q_n}{2}\rfloor -(s+\lfloor\eta s\rfloor)  q_{n-n_0},j q_n+\lfloor\frac{q_n}{2}\rfloor+(s+\lfloor\eta s\rfloor) q_{n-n_0}-1 ],
\end{eqnarray*}
and let $\theta_m=\theta+m\alpha$ for $m\in I_1\cup I_2$. The set $\{\theta_m\}_{m\in I_1\cup I_2}$
consists of $(4s+2\lfloor \eta s\rfloor ) q_{n-n_0}$ elements. Let $k=(4s+2\lfloor \eta s\rfloor ) q_{n-n_0}-1$.

%Considering set $I_1\cup I_2$,
By  modifying  the proof of \cite[Lemma 9.9]{avila2009ten} or   \cite[Lemma 4.1]{liu2015anderson}, we can prove the claim (Claim 1):
  for any $\varepsilon>0$,   $  m\in I_1$, one has $La_m\leq  \varepsilon q_n$; and for any
$  m\in I_2$, one has $La_m\leq  q_n (\beta+ \varepsilon) $.
We also give the proof in the Appendix.

By Lemma \ref{Le.Uniform}, there exists some $j_0\in I_1$ such that
  $P_k(\theta_{j_0}-\frac{k-1}{2}\alpha) \geq e^{k\ln\lambda -\varepsilon q_n}$, or some $j_0\in I_2$ such that $P_k(\theta_{j_0}-\frac{k-1}{2}\alpha) \geq e^{k\ln\lambda-(\beta +\varepsilon)q_n}$.
  \par
  Suppose $j_0\in I_1$, i.e.,  $P_k(\theta_{j_0}-\frac{k-1}{2}\alpha) \geq e^{k\ln\lambda -\varepsilon q_n}$.  Let  $I=[j_0-2sq_{n-n_0}-\lfloor s\eta \rfloor q_{n-n_0}+1,j_0+2sq_{n-n_0}+\lfloor s\eta\rfloor q_{n-n_0}-1]=[x_1,x_2]$.  Denote by   $x_1^\prime=x_1-1$ and $x_2^\prime=x_2+1$.

   By
    (\ref{Cramer1}),  (\ref{Cramer2}) and (\ref{Numerator}),
 it is easy to verify
 \begin{eqnarray*}
 % \nonumber to remove numbering (before each equation)
   |G_I(0,x_i)| &\leq& e^{(\ln\lambda+\varepsilon )(k-2-|x_i|)- k\ln\lambda+\varepsilon q_n } \\
     &\leq&   e^{-|x_i|\ln \lambda +C\varepsilon q_n}.
 \end{eqnarray*}

Using (\ref{Block}) and noticing that  $|x_i|\geq \frac{\eta s }{2}q_{n-n_0}$, we obtain
\begin{equation}\label{Iterationr_j}
    |\phi(0)|  \leq \sum_{i=1,2}  e^{-\frac{\eta s}{2} q_{n-n_0}\ln \lambda+ C\varepsilon q_n}|\phi(x_i^\prime)|< 1,
\end{equation}
where the second inequality holds by (\ref{sq}).
  This  is contradicted to the fact $ \phi(0)=1$.
  \par
  Thus there exists  $j_0\in I_2$ such that $P_k(\theta_{j_0}-\frac{k-1}{2}\alpha) \geq e^{k\ln\lambda-(\beta +\varepsilon)q_n}$.
      Let  $I=[j_0-2sq_{n-n_0}-\lfloor s\eta \rfloor q_{n-n_0}+1,j_0+2sq_{n-n_0}+\lfloor s\eta\rfloor q_{n-n_0}-1]=[x_1,x_2]$.   By   (\ref{Cramer1}), (\ref{Cramer2}) and (\ref{Numerator}) again,
 we   have
\begin{equation}\label{G.Green}
|G_I(p,x_i)|\leq e^{(\ln\lambda+\varepsilon )(k-2-|p-x_i|)-k\ln\lambda + \beta q_n +\varepsilon q_n},
\end{equation}
where $p=jq_n+\lfloor\frac{q_n}{2}\rfloor+rq_n$.
Using (\ref{Block}), we obtain
\begin{equation}\label{Iterationr_j}
    |\phi(p)|  \leq \sum_{i=1,2} e^{( \beta +C\eta)q_{n}}|\phi(x_i^\prime)|e^{-|p-x_i|\ln \lambda }.
\end{equation}
Let $d_{i,i_1,i_2}=  |x_i- i_1q_n- i_2\frac{q_n}{2}|  $, where $i=1,2$, $i_1\in \mathbb{Z}$ and $i_2=0,1$.
If $d_{i,i_1,i_2}\geq 10 \eta q_n$, then we replace $\phi(x_i)$  in  (\ref{Iterationr_j}) with (\ref{Intervalk}) (or (\ref{Intervalk1})).
If   $d_{i,i_1,i_2}\leq 10 \eta q_n$, then we replace $\phi(x_i^\prime)$  in  (\ref{Iterationr_j}) with  $r_{i_1+\frac{i_2}{2}}$.
Then
we have
\begin{eqnarray}
% \nonumber to remove numbering (before each equation)
   r_{j+\frac{1}{2}} &\leq& \max\{ \exp\{-\frac{1}{2}(\ln \lambda-2\beta- C\eta)q_n\} r_{j}, \exp\{-\frac{1}{2}(\ln \lambda-2\beta- C\eta)q_n\}r_{j+1}, \nonumber\\
   & & \exp
     \{-2sq_{n-n_0}\ln\lambda+\beta q_n+C\eta q_n\}r_{j+\frac{1}{2}}\}. \label{GHalfr_j1}
\end{eqnarray}

 By (\ref{sq}),
 one has
 \begin{eqnarray*}
 % \nonumber to remove numbering (before each equation)
   -2sq_{n-n_0}\ln\lambda+\beta q_n+C\eta q_n &<& (-\frac{\ln\lambda}{3}+\beta+C\eta)q_n \\
  &<&  0,
 \end{eqnarray*}
 for small $\eta$.
 This implies
 \begin{equation*}
 r_{j+\frac{1}{2}}
 \leq    \exp
     \{-2(s\eta+s)q_{n-n_0}\ln\lambda+\beta q_n+C\eta q_n\}r_{j+\frac{1}{2}}
 \end{equation*}
 can not happen.
 \par
Thus (\ref{GHalfr_j1}) becomes
\begin{equation}\label{GHalfr_j2}
    r_{j+\frac{1}{2}}\leq \max\{ \exp\{-\frac{1}{2}(\ln \lambda-2\beta- C\eta)q_n\} r_{j}, \exp\{-\frac{1}{2}(\ln \lambda-2\beta- C\eta)q_n\}r_{j+1}\}.
\end{equation}

\end{proof}
\begin{lemma}\label{Le.r_j}
For $   1 \leq |j| \leq  4\frac{b_{n+1}}{q_n}+12$, the following holds
\begin{equation}\label{Gr_j}
     r_{j }\leq \max\{\max_{t\in O}\{ \exp\{-(   |t| \ln\lambda-\beta -C\eta)q_n \} r_{j+t}\},  \exp\{-(\ln\lambda-3 \beta -C\eta )q_n \}r_{\pm 1}\} ,
\end{equation}
where $O=\{\pm\frac{3}{2} ,\pm\frac{1}{2}\}$.

\end{lemma}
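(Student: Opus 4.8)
The proof runs parallel to that of Lemma~\ref{Le.halfr_j}, with the half-integer resonant point $(j+\frac12)q_n$ there replaced by the integer point $jq_n$. Fix $\phi(p)$ with $p=jq_n+rq_n$, $|r|\le 10\eta$, and assume without loss of generality $j\ge 1$. As in Lemma~\ref{Le.halfr_j} I would take $n_0$ to be the least positive integer for which $\frac1\eta q_{n-n_0}\le c\,q_n$ and $s$ the largest integer with $sq_{n-n_0}\le c\,q_n$, where $c$ is chosen so that the blocks produced below have half-length comparable to $\frac32 q_n$ — exactly what is needed for a block around $p$ to reach the resonant points $(j\pm\frac32)q_n$ that enter through the set $O$ in \eqref{Gr_j}. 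Then $s\ge\frac1\eta$ and $sq_{n-n_0}$ is pinned into a window of relative width $O(\eta)$, as in \eqref{sq}.

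Next I would set up a family of candidate phases $\{\theta_m=\theta+m\alpha\}_{m\in S}$ with $|S|=k+1$, splitting $S$ into a part near the origin and a part near $jq_n$. The origin part plays the role of $I_1$ in Lemma~\ref{Le.halfr_j}: if Lemma~\ref{Le.Uniform} returns a center $j_0$ there, then \eqref{Cramer1}--\eqref{Cramer2}, \eqref{Numerator} force $|G_I(0,x_i)|$ so small that \eqref{Block} gives $|\phi(0)|<1$, contradicting $\phi(0)=1$; hence $j_0$ lies in the part near $jq_n$. The crucial point, and the source of the improvement of the constant from $4$ to $3$, is that I would not estimate the Lagrange terms $La_m$ by a single uniform bound over one fixed small interval, but treat each candidate $m$ individually. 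In a window of length $O(q_n)$ about $jq_n$, a phase $\theta_m$ has at most one phase partner $\theta_{m'}$ (namely $m+m'\equiv -a\pmod{q_n}$, where $2\theta\equiv a\alpha\bmod\mathbb Z$, $a\in\{0,1\}$) and, if the window is longer than $q_n$, a bounded number of frequency partners $\theta_{m\pm q_n},\dots$; the $\cos$-distance to each such partner is of order $\|q_n\alpha\|$ up to a factor polynomial in $j$, hence contributes at most $(\beta+\varepsilon)q_n$ to $La_m$. By arranging the candidate set so that each $m$ in it sees only one genuine resonance — except for the $m$ whose surrounding window is forced to overlap $0$, which may see up to three — I obtain $La_m\le(\beta+\varepsilon)q_n$ for the former and $La_m\le(3\beta+\varepsilon)q_n$ for the latter. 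This individual handling of the interpolation is the extra flexibility referred to in the introduction, and is what permits the endpoints of the chosen block to be placed at the resonant points $(j\pm\frac12)q_n$, $(j\pm\frac32)q_n$, or near $\pm q_n$.

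Granting these $La_m$ bounds I would conclude as in Lemma~\ref{Le.halfr_j}: apply Lemma~\ref{Le.Uniform}; discard the origin family by the $\phi(0)=1$ contradiction; for the good center $j_0$ form $I=[x_1,x_2]$ around it and bound $G_I(p,x_i)$ via \eqref{Cramer1}--\eqref{Cramer2}, \eqref{Numerator} and the lower bound $P_k(\theta_{j_0}-\frac{k-1}2\alpha)\ge e^{k\ln\lambda-(\beta+\varepsilon)q_n}$ (or $e^{k\ln\lambda-(3\beta+\varepsilon)q_n}$ in the origin-overlapping case); insert this into \eqref{Block}. At each near-endpoint $x_i^\prime=x_i\pm1$: if $x_i^\prime$ lies within $10\eta q_n$ of some $tq_n$ or $(t+\frac12)q_n$ I replace $\phi(x_i^\prime)$ by $r_t$ resp.\ $r_{t+\frac12}$; otherwise I invoke Theorem~\ref{Th.Nonresonant}/Lemma~\ref{Le.resonant} for integer-type points, or Lemma~\ref{Le.halfr_j} to absorb a half-integer endpoint into $r_t,r_{t+1}$. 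Tracking the geometry — for each endpoint, $|p-x_i|$ plus the distance from $x_i$ to the relevant resonant point equals the correct multiple of $q_n$ — the resulting coefficients are $\exp\{-(|t|\ln\lambda-\beta-C\eta)q_n\}$ in front of $r_{j+t}$ for $t\in O$, $\exp\{-(\ln\lambda-3\beta-C\eta)q_n\}$ in front of $r_{\pm1}$, and $\exp\{-c\,q_n\}$ with $c>0$ (here using $\lambda>e^{3\beta}$ and $\eta$ small, as in the estimate after \eqref{GHalfr_j1}) in front of the self-referential term $r_j$; dropping the last term yields \eqref{Gr_j}.

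The step I expect to be the real obstacle is the individual Lagrange-interpolation analysis of the second paragraph: one must enumerate the frequency and phase resonances inside windows of length up to $\approx 3q_n$ about each candidate center, show that by shifting the center one can always avoid accumulating more than one resonance's worth of Lagrange loss unless the window is forced over $0$, and keep the $q_n\|q_n\alpha\|$-type errors from the frequency resonance under control — they must remain $O(\eta q_n)$, which is where $|j|\le 4b_{n+1}/q_n+12$ is used. A secondary, more routine difficulty is the endpoint bookkeeping: since the block is now long enough to contain several points of $q_n\mathbb Z+\frac{q_n}2\mathbb Z$, one must decide case by case which of Lemma~\ref{Le.resonant} and Lemma~\ref{Le.halfr_j} applies at $x_i^\prime$, and with which exponent, in order to land exactly on the constants claimed in \eqref{Gr_j}.
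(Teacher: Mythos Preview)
Your outline has the right frame --- parallel to Lemma~\ref{Le.halfr_j}, split the candidate set between origin and $jq_n$, rule out the origin piece via $\phi(0)=1$, bound endpoints through Lemma~\ref{Le.resonant} --- but it misses the specific mechanism that produces \eqref{Gr_j}. The decisive step in the paper is a \emph{three}-way split of the candidate set: $J_1=[-2sq_{n-n_0},-1]$ near the origin, and around $jq_n$ an inner core $J_3=[jq_n-2sq_{n-n_0},jq_n+2sq_{n-n_0}-1]$ together with an outer annulus $J_2$ consisting of the two length-$sq_{n-n_0}$ segments just beyond $J_3$; here $sq_{n-n_0}\approx q_n/6$, so the set near $jq_n$ has length only about $q_n$. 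Claim~2 gives a \emph{differentiated} Lagrange bound: $La_m\le(\beta+\varepsilon)q_n$ for $m\in J_2$, but $La_m\le 2(\beta+\varepsilon)q_n$ for $m\in J_1\cup J_3$. If Lemma~\ref{Le.Uniform} returns $j_0\in J_2$, the block $[j_0-4sq_{n-n_0}+1,j_0+4sq_{n-n_0}-1]$ (length $\approx\tfrac43 q_n$) is off-centre from $jq_n$, so its endpoints fall among $(j\pm\tfrac12)q_n,(j\pm1)q_n,(j\pm\tfrac32)q_n$; this yields the first maximum in \eqref{Gr_j}. If $j_0\in J_3$, one gets instead a $2\beta$-loss estimate with endpoints among $(j\pm\tfrac12)q_n,(j\pm1)q_n$; feeding the half-integer terms through Lemma~\ref{Le.halfr_j} adds one more $\beta$ and yields $r_j\le e^{-(\ln\lambda-3\beta-C\eta)q_n}r_{j\pm1}$, equation~\eqref{Gsecondadd}. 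The symbol $r_{\pm1}$ in \eqref{Gr_j} stands for $r_{j\pm1}$, not $r_1,r_{-1}$; the $J_1$ case is disposed of by the same $J_3$-type analysis recentred at $0$.

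Your proposal instead takes the block half-length $\approx\tfrac32 q_n$ so as to reach $(j\pm\tfrac32)q_n$ directly from a centre at $jq_n$. But a candidate window of length $\sim 3q_n$ contains several frequency and phase resonances, so every $La_m$ picks up multiple factors of $\beta$ and one cannot beat the $e^{4\beta}$ threshold the paper is trying to surpass. The ``window forced over $0$'' scenario you describe, with a $3\beta$ loss and literal $r_{\pm1}$, is an artifact of this over-long block; in the actual argument the block near $jq_n$ never overlaps the origin, and the $3\beta$ in \eqref{Gr_j} arises purely from the $J_3$ branch combined with Lemma~\ref{Le.halfr_j}. The ``individual Lagrange interpolation'' advertised in the introduction is precisely the $J_2$/$J_3$ distinction, not a per-centre shifting argument of the kind you sketch.
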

\begin{proof}
%Suppose $b_n\leq k<b_{n+1}$ is resonant, let $k=\ell q_n+y$ with   $|y|\leq b_n$  and $1\leq \ell\leq 2\frac{b_{n+1}}{q_n}$.
%In the following arguments, $j$ is a positive integer and $j\leq 2\ell$

%\textbf{Claim 1:}$|\phi(j q_n+r)|\leq |j|\exp\{-(\ln \lambda-C\eta)q_n\}$ when $j\geq 1$  and $|r|\leq C\eta q_n$.
%\par

 It suffices to estimate $\phi(j q_n +rq_n)$ with $|j|\geq 1$  and $|r|\leq 10\eta  $. Without loss of generality assume $j\geq1$.
 %Following  the above arguments,  it is easy to get
% \begin{equation}\label{G.halfadd1}
 %    r_{q-\frac{1}{2}}\leq \exp\{-\frac{1}{2}(\ln \lambda-2\beta- C\eta)q_n\} \max\{ r_{q-1}, r_{q}\}.
%\end{equation}
%In this case, we will show that (\ref{r_j}) also  holds by Lemma \ref{Le.halfr_j}. For this reason, we construct new intervals.
 Let $n_0$ be the least positive integer such that
\begin{equation*}
 \frac{1}{\eta} q_{n-n_0}\leq \frac{ q_n}{6} -2.
\end{equation*}
 Let $s$ be the
largest positive integer such that $sq_{n-n_0}\leq  \frac{q_n}{6}-2 $.
Then  $s\geq \frac{1}{\eta}$.

 \par
Set $J_1, J_2,J_3\subset \mathbb{Z}$ as follows
\begin{eqnarray*}
% \nonumber to remove numbering (before each equation)
  J_1 &=& [-2sq_{n-n_0},-1], \\
   J_2 &=& [ j q_n-3sq_{n-n_0},j q_n-2sq_{n-n_0}-1 ]\cup [ j q_n+2sq_{n-n_0},j q_n+3sq_{n-n_0}-1 ],\\
   J_3 &=& [ j q_n-2sq_{n-n_0},j q_n+2sq_{n-n_0}-1 ],
\end{eqnarray*}
and let $\theta_m=\theta+m\alpha$ for $m\in J_1\cup J_2\cup J_3$. The set $\{\theta_m\}_{m\in J_1\cup J_2\cup J_3}$
consists of $8sq_{n-n_0}$ elements.
By modifying the proof of \cite[Lemma 9.9]{avila2009ten} or   \cite[Lemma 4.1]{liu2015anderson} again, we can prove the claim (Claim 2)
 that   for any $ m\in J_1\ \cup J_3$ and any $\varepsilon>0$, $La_m\leq 2(\beta+\varepsilon) q_n$,
 and for any $m\in J_2$, $La_m\leq (\beta+\varepsilon) q_n$. We also give the details of proof  in the Appendix.

% and $C$  is a large absolute constant below.
   Applying   Lemma  \ref{Le.Uniform}, there exists some $j_0$ with  $j_0\in J_1\cup J_3$
    such that
    \begin{equation*}
        P_{8sq_{n-n_0}-1}(\theta_{j_0}- (4sq_{n-n_0}-1)\alpha)\geq e^{8sq_{n-n_0}\ln \lambda-2\beta q_n-\varepsilon q_n},
    \end{equation*}
    or
    there exists some $j_0$ with  $j_0\in J_2$
    such that
    \begin{equation*}
        P_{8sq_{n-n_0}-1}(\theta_{j_0}-(4sq_{n-n_0}-1)\alpha)\geq e^{8sq_{n-n_0}\ln \lambda-\beta q_n-\varepsilon q_n}.
    \end{equation*}
    If $j_0\in J_2$, let  $I=[j_0- 4sq_{n-n_0}+1 ,j_0+4sq_{n-n_0}-1]=[x_1,x_2]$, then
    \begin{equation}\label{G.Green}
|G_I(jq_n +rq_n,x_i)|\leq e^{(\ln\lambda+\eta )(8sq_{n-n_0}-2-| jq_n +rq_n-x_i|)-8sq_{n-n_0}\ln\lambda + \beta q_n+C\eta q_n}.
\end{equation}
Using (\ref{Block}), we obtain
\begin{equation}\label{Iterationr_j2}
    |\phi(j q_n+rq_n)|  \leq \sum_{i=1,2} e^{(\beta +C\eta) q_{n}}|\phi(x_i^\prime)|e^{-|jq_n+rq_n-x_i|\ln \lambda }.
\end{equation}

  Recall that  $d_{i,i_1,i_2}=  |x_i- i_1q_n- i_2\frac{q_n}{2}|  $, where $i=1,2$, $i_1\in \mathbb{Z}$ and $i_2=0,1$.
If $d_{i,i_1,i_2}\geq 10 \eta q_n$, then we replace $\phi(x_i^\prime)$  in  (\ref{Iterationr_j2}) with (\ref{Intervalk})(or (\ref{Intervalk1})).
If   $d_{i,i_1,i_2}\leq 10 \eta q_n$, then we replace $\phi(x_i^\prime)$  in (\ref{Iterationr_j2}) with   $r_{i_1+\frac{i_2}{2}}$.

Then by \eqref{Iterationr_j2},
we have
\begin{equation*}
     r_{j }\leq  \exp\{ \beta q_n+C\eta q_n\} \max\{\max_{t\in O}\{ \exp\{- |t|q_n \ln\lambda\} r_{j+t}, \exp\{ -2sq_{n-n_0}\ln\lambda\}r_j\}\},
\end{equation*}
where $O=\pm\frac{3}{2},\pm1,\pm\frac{1}{2}$.
\par
Noting $sq_{n-n_0}\geq (1-\eta)\frac{1}{6}q_n$ (using $(s+1)q_{n-n_0}>\frac{1}{6}q_n-2$ and $ s\geq\frac{1}{\eta}$),
then
\begin{equation*}
r_{j}\leq  \exp\{ \beta q_n+C\eta q_n\} \exp\{ -2sq_{n-n_0}\ln\lambda\}r_j
\end{equation*}
 can not happen since $\ln \lambda>3\beta$.

Thus
\begin{equation*}
     r_{j }\leq \max_{t\in O}\{ \exp\{ \beta q_n+C\eta q_n - |t|q_n \ln\lambda\} r_{j+t} \},
\end{equation*}
where $O=\pm\frac{3}{2},\pm1,\pm\frac{1}{2}$.
This implies (\ref{Gr_j}).
\par
If $j_0\in J_3$, by the same arguments, we have
\begin{equation*}
     r_{j }\leq \max_{t\in \{\pm1,\pm\frac{1}{2}\}}\{ \exp\{2 \beta q_n+C\eta q_n - |t|q_n \ln\lambda\} r_{j+t} \}.
\end{equation*}
Using the estimate of $r_{j\pm\frac{1}{2}}$ in  Lemma \ref{Le.halfr_j},
we have
\begin{equation*}
     r_{j }\leq \exp\{-(\ln\lambda-3\beta-C\eta)q_n\} \max\{ r_{j\pm1},r_{j} \}.
\end{equation*}
By the same reason,
\begin{equation*}
r_{j}\leq  \exp\{-(\ln\lambda-3\beta-C\eta)q_n\}r_{j}
\end{equation*}
 can not happen.
 Thus
 \begin{equation}\label{Gsecondadd}
     r_{j }\leq   \exp\{-(\ln\lambda-3\beta-C\eta)q_n\}r_{j\pm1}.
\end{equation}
This also implies (\ref{Gr_j}).
\par
If $j_0 \in J_1$, then (\ref{Gsecondadd}) holds for $j=0$, which will  lead to
$|\phi(0)|< 1$. This is impossible.
\end{proof}
\textbf{Proof of Theorem \ref{Th.resonant}}
\begin{proof}

 By Lemmas  \ref{Le.halfr_j} and \ref{Le.r_j},  for  $ 1\leq   j  \leq 2\frac{b_{n+1}}{q_n}+4 $, we have
 \begin{equation}\label{G.r_j2}
     r_{j-\frac{1}{2} }\leq  \exp\{ - \frac{1}{2}(\ln \lambda-3\beta-C\eta) q_n \} \max\{r_{j-1},r_{j}\} ,
\end{equation}
and
\begin{equation}\label{G.r_j1}
     r_{j }\leq  \max_{t\in O}\{\exp\{ -|t|(\ln \lambda-3\beta-C\eta) q_n \} r_{j+t}\} ,
\end{equation}
where $O=\{\pm\frac{3}{2},\pm1,\pm\frac{1}{2}$\}.
 For  $ -\frac{b_{n+1}}{q_n}-3\leq   j \leq -1 $, we have
 \begin{equation}\label{G.r_j2oct}
     r_{j+\frac{1}{2} }\leq  \exp\{ - \frac{1}{2}(\ln \lambda-3\beta-C\eta) q_n \} \max\{r_{j+1},r_{j}\} ,
\end{equation}
and
\begin{equation}\label{G.r_j1oct}
     r_{j }\leq  \max_{t\in O}\{\exp\{ -|t|(\ln \lambda-3\beta-C\eta) q_n \} r_{j+t}\} .
\end{equation}
%where $O=\{\pm\frac{3}{2},\pm1,\pm\frac{1}{2}$\}.

% for $ 1\leq   j  \leq 2\ell -1  $, we have

\par
Suppose $\ell>0$.
Let $j=\ell$ in   (\ref{G.r_j1}) and (\ref{G.r_j2}),   and iterate     $ 2\ell$  times or until $j\leq 1$,
 we obtain
 \begin{equation}\label{radd1}
     r_{\ell} \leq  (2\ell+2)q_n\exp\{- (\ln \lambda-3\beta -C\eta) \ell q_n\},
 \end{equation}
 and
 \begin{equation}\label{radd2}
 r_{\ell-\frac{1}{2}}\leq  (2\ell+2)q_n\exp\{- (\ln \lambda-3\beta -C\eta) (\ell -\frac{1}{2})q_n\}.
 \end{equation}
Notice that  we have used the fact that $|r_{j}|\leq (|j|+2)q_n$ and $|r_{j-\frac{1}{2}}|\leq (|j-\frac{1}{2}|+2)q_n$.

Suppose $\ell<0$.
Let $j=\ell$ in   (\ref{G.r_j1oct}) and (\ref{G.r_j2oct}),   and iterate    $2|\ell|$ times  or until $j\geq -1$,
 we obtain
 \begin{equation}\label{radd1oct}
     r_{\ell} \leq  (2\ell+2)q_n\exp\{- (\ln \lambda-3\beta -C\eta) |\ell| q_n\},
 \end{equation}
 and
 \begin{equation}\label{radd2oct}
 r_{\ell+\frac{1}{2}}\leq  (2\ell+2)q_n\exp\{- (\ln \lambda-3\beta -C\eta) |\ell +\frac{1}{2}|q_n\}.
 \end{equation}
Now   Theorem \ref{Th.resonant} follows from (\ref{radd1}),  (\ref{radd2}), (\ref{radd1oct}) and   (\ref{radd2oct}).
 \end{proof}
 {\bf Proof of  Theorem \ref{MaintheoremAnderson}}
\begin{proof}
Without loss of generality, we assume $k>0$.
Let $\eta>0$ be   much smaller than $\ln\lambda-3\beta$.
For any    $k$,
let $n$ be such that  $b_n\leq k<b_{n+1}$.

Case 1: $  {\rm dist}(k, q_n\mathbb{Z}+\frac{q_n}{2}\mathbb{Z} )\leq     10\eta q_n$.

In this case, applying Theorem \ref{Th.resonant}, one has
\begin{equation}\label{Decayoct1}
      | \phi(k)| ,  | \phi(k-1)|\leq   \exp\{- (\ln \lambda-3\beta -C\eta)|k|\}.
\end{equation}

Case 2: $  { \rm dist}(k, q_n\mathbb{Z}+\frac{q_n}{2}\mathbb{Z} )\geq     10\eta q_n$.

Let $0\leq j\leq \frac{b_{n+1}}{q_n}$ such that $k\in [jq_n,(j+\frac{1}{2})q_n]$  or $k\in [(j+\frac{1}{2})q_n,(j+1)q_n]$.

By Lemma \ref{Le.resonant} and
Theorem \ref{Th.resonant}, one also has
\begin{equation}\label{Decayoct2}
      | \phi(k)|, | \phi(k-1)|\leq   \exp\{- (\ln \lambda-3\beta -C\eta)|k|\}.
\end{equation}
By \eqref{Decayoct1},\eqref{Decayoct2} and letting $\eta\to 0$,
we have
\begin{equation*}
    \limsup_{k\to \infty} \frac{\ln (\phi^2(k)+\phi^2(k-1))}{2|k|} \leq -(\ln \lambda-3\beta).
\end{equation*}
We finish the proof.

\end{proof}

\appendix
\section{Proof of Theorem \ref{Th.Nonresonant}, Claims 1 and  2}

 Let $ \frac{p_n}{q_n}$ be the continued fraction approximations to $\alpha$, then
\begin{equation}\label{GDC1}
\forall 1\leq k <q_{n+1},  \text{dist}( k\alpha,\mathbb{Z})\geq  |q_n\alpha -p_n|,
\end{equation}
and
\begin{equation}\label{GDC2}
      \frac{1}{2q_{n+1}}\leq|q_n\alpha-p_n| \leq\frac{1}{q_{n+1}}.
\end{equation}
\begin{lemma} (\text{Lemma } 9.7, \cite{avila2009ten})
Let $\alpha\in \mathbb{R}\backslash \mathbb{Q}$, $x\in\mathbb{R}$ and $0\leq \ell_0 \leq q_n-1$ be such that
$ |\sin\pi(x+\ell_0\alpha)|=\inf_{0\leq\ell\leq q_n-1}    |\sin\pi(x+\ell \alpha)|$, then for some absolute constant $C > 0$,
\begin{equation}\label{G927}
    -C\ln q_n\leq \sum _{\ell=0\atop \ell\neq \ell_0}^{q_n-1} \ln|\sin\pi(x+\ell\alpha )|+(q_n-1)\ln2\leq  C\ln q_n.
\end{equation}
  \end{lemma}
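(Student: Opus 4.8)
The statement is a two-sided bound on the Birkhoff sum of the logarithmically singular observable $g(t)=\ln|2\sin\pi t|$ along the orbit segment $\{x+\ell\alpha\}_{\ell=0}^{q_n-1}$ with the single point closest to $\mathbb{Z}$ deleted; note that $\sum_{\ell\neq\ell_0}\ln|\sin\pi(x+\ell\alpha)|+(q_n-1)\ln 2=\sum_{\ell\neq\ell_0}g(x+\ell\alpha)$, and recall the classical fact $\int_0^1 g(t)\,dt=0$. The plan has three steps.

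First, a three-distance estimate: once the index $\ell_0$ realizing the minimum is removed, all remaining $q_n-1$ orbit points lie at distance at least $\frac{1}{4q_n}$ from $\mathbb{Z}$. Indeed, if two of them, with indices $\ell_1\neq\ell_2$, were both within $\frac{1}{4q_n}$ of $\mathbb{Z}$, then $k=\ell_1-\ell_2$ would satisfy $0<|k|<q_n$ and $\|k\alpha\|=\|(x+\ell_1\alpha)-(x+\ell_2\alpha)\|<\frac{1}{2q_n}$, contradicting the best-approximation bound $\min_{0<|k|<q_n}\|k\alpha\|=\|q_{n-1}\alpha\|\geq\frac{1}{2q_n}$, which is \eqref{GDC1}--\eqref{GDC2} applied at level $n-1$.

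Second, truncate the singularity: put $\delta=\frac{1}{8q_n}$ and $g_\delta(t)=\max\{g(t),\ln(2\sin\pi\delta)\}$. Then $g_\delta$ is of bounded variation with $\operatorname{Var}(g_\delta)=2\ln\frac{1}{\sin\pi\delta}\leq C\ln q_n$ (the variation is carried by the two monotone ``cliffs'' of depth $\asymp\ln q_n$ flanking the integers). The classical Denjoy--Koksma inequality at the denominator $q_n$ gives $|\sum_{\ell=0}^{q_n-1}g_\delta(x+\ell\alpha)-q_n\int_0^1 g_\delta|\leq\operatorname{Var}(g_\delta)\leq C\ln q_n$; and since $g_\delta$ differs from $g$ only on the two $\delta$-neighborhoods of $\mathbb{Z}$, where $0\le g_\delta-g$ and $\int_0^1(g_\delta-g)\le C\delta|\ln\delta|$, one has $q_n|\int_0^1 g_\delta|=q_n|\int_0^1(g_\delta-g)|\leq C\ln q_n$ using $\int_0^1 g=0$. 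Hence $|\sum_{\ell=0}^{q_n-1}g_\delta(x+\ell\alpha)|\leq C\ln q_n$.

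Third, combine: by Step~1, $\|x+\ell\alpha\|\geq\frac{1}{4q_n}>\delta$ for every $\ell\neq\ell_0$, so $g_\delta(x+\ell\alpha)=g(x+\ell\alpha)$ there; therefore $\sum_{\ell\neq\ell_0}g(x+\ell\alpha)=\sum_{\ell=0}^{q_n-1}g_\delta(x+\ell\alpha)-g_\delta(x+\ell_0\alpha)$, and the subtracted term lies in $[\ln(2\sin\pi\delta),\ln 2]\subset[-C\ln q_n,\ln 2]$, which yields the asserted bound. The one delicate point is that $g$ is not of bounded variation, so Denjoy--Koksma cannot be applied to it directly; truncating at scale $\delta\asymp 1/q_n$ repairs this, and Step~1 is exactly what guarantees the truncation is invisible to the orbit minus $\ell_0$. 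A more self-contained alternative avoids Denjoy--Koksma: start from the exact identity $\prod_{j=0}^{q-1}|2\sin\pi(y+j/q)|=2|\sin\pi qy|$, so that deleting the minimal factor pins the rational partial product between $\frac{2}{\pi}q$ and $q$, and then transfer to the irrational rotation number $\alpha$ using \eqref{GDC1}--\eqref{GDC2} and, once more, Step~1 to handle the few terms near the singularity.
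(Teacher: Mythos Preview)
The paper does not supply a proof of this lemma; it is quoted verbatim from Avila--Jitomirskaya with only the citation. Your Denjoy--Koksma argument is correct and complete, modulo one small slip in the phrasing of Step~1: as written, you show that no \emph{two} of the $q_n-1$ remaining points can both lie within $\tfrac{1}{4q_n}$ of $\mathbb{Z}$, which only rules out a second close point, not a first. The fix is immediate: if some $\ell_1\neq\ell_0$ had $\|x+\ell_1\alpha\|<\tfrac{1}{4q_n}$, then since $\ell_0$ is the minimizer one also has $\|x+\ell_0\alpha\|<\tfrac{1}{4q_n}$, and your triangle-inequality contradiction applies to the pair $(\ell_0,\ell_1)$.

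For comparison, the original proof in \cite{avila2009ten} follows essentially the route you sketch at the end as the ``more self-contained alternative'': it starts from the exact product formula $\prod_{j=0}^{q-1}2|\sin\pi(y+j/q)|=2|\sin\pi qy|$ for the rational rotation $p_n/q_n$ and then controls the passage from $p_n/q_n$ to $\alpha$ term by term using the gap estimates \eqref{GDC1}--\eqref{GDC2}. Your truncation-plus-Denjoy--Koksma approach trades that explicit trigonometric computation for a soft bounded-variation estimate; it is arguably cleaner conceptually, and both arguments ultimately rest on the same three-distance fact (your Step~1) to isolate the single term near the singularity.
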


  {  \bf Proof of Theorem \ref{Th.Nonresonant}}
\begin{proof}
We only give the proof of  case 1: $b_n\leq |y|< Cb_{n+1}$ is non-resonant.

By the definition of $s$ and $n_0$, we have
$4sq_{n-n_0}\leq {\rm dist}(y,q_n \mathbb{Z})-2$ and  $4q_{n-n_0+1}> {\rm dist}(y,q_n \mathbb{Z})-2$. This leads to $sq_{n-n_0}\leq q_{n-n_0+1}$.
Set $I_1, I_2\subset \mathbb{Z}$ as follows
\begin{eqnarray*}
% \nonumber to remove numbering (before each equation)
  I_1 &=& [-2sq_{n-n_0},-1], \\
   I_2 &=& [ y-2sq_{n-n_0},y+2sq_{n-n_0}-1 ],
\end{eqnarray*}
and let $\theta_j=\theta+j\alpha$ for $j\in I_1\cup I_2$. The set $\{\theta_j\}_{j\in I_1\cup I_2}$
consists of $6sq_{n-n_0}$ elements.
\par
Let $k=6sq_{n-n_0}-1$.
 We estimate $ La_i$ first. For this reason,
 let $x=\cos2\pi a$,   and take    the logarithm in (\ref{Def.Uniform}), one has
  \begin{equation*}
   \ln \prod_{ j\in I_1\cup  I_2  \atop j\neq i } \frac{|\cos2\pi a-\cos2\pi\theta_j|}
        {|\cos2\pi\theta_i-\cos2\pi\theta_j|}\;\;\;\;\;\;\;\;\;\;\;\;\;\;\;\;\;\;\;\;\;\;\;\;\;\;\;\;\;\;\;\;\;\;\;\;\;\;\;\;\;\;\;\;\;\;\;\;\;\;\;\;\;
        \;\;\;\;\;\;\;\;\;\;\;\;\;\;\;
  \end{equation*}
         \begin{equation*}
          =   \sum _{ j\in I_1\cup  I_2  \atop j\neq i }\ln|\cos2\pi a-\cos2\pi \theta_j|- \sum _{ j\in I_1\cup  I_2  \atop j\neq i }\ln|\cos2\pi\theta_i  -\cos2\pi \theta_j| .
         \end{equation*}

We  start to estimate $ \sum _{ j\in I_1\cup  I_2  , j\neq i }\ln|\cos2\pi a-\cos2\pi \theta_j| $.
Obviously,
   $$ \sum _{ j\in I_1\cup  I_2 \atop j\neq i }\ln|\cos2\pi a-\cos2\pi \theta_j| \;\;\;\;\;\;\;\;\;\;\;\;\;\;\;\;\;\;\;\;\;\;\;\;\;\;\;\;\;\;\;\;\;\;\;\;\;\;\;\;\;\;\;\;\;\;\;\;\;\;\;\;\;\;\;\;$$
$$\;\;\;\;\;\;\;\;\;\;\;\;\;\;\;\;\;\;\;\;\;=\sum_{ j\in I_1\cup  I_2  \atop j\neq i }\ln|\sin\pi(a+\theta_j)|+\sum_{ j\in I_1\cup  I_2  \atop j\neq i }\ln |\sin\pi(a-\theta_j)|
+(6sq_{n-n_0}-1)\ln2  $$
\begin{equation*}%\label{G36}
    =\Sigma_{+}+\Sigma_-+(6sq_{n-n_0}-1)\ln2.  \;\;\;\;\;\;\;\;\;\;\;\;\;\;\;\;\;\;\;\;\; \;\;\;\;\;\;\;\;\;\;\;\;\;\;\;\;\;\;\;\;\;\;\;\;\;\;\;\;\;\;\;\;
\end{equation*}
Both $\Sigma_+$ and $\Sigma_-$ consist of $6s$ terms of the form of  (\ref{G927}), plus 6s terms of the form
\begin{equation*}%\label{G.4sappsmallest}
    \ln\min_{j=0,1,\cdots,q_{n-n_0}}|\sin\pi(x+j\alpha)|,
\end{equation*}
minus $\ln|\sin\pi(a\pm\theta_i)|$.
       Thus,  using   (\ref{G927})  6s times of $\Sigma_{+}$ and $\Sigma_{-}$ respectively,  one has
 \begin{equation}\label{G.appnumerator}
   \sum_{j \in I_1  \cup I_2\atop j\neq i}\ln|\cos2\pi a-\cos2\pi \theta_{j}|\leq-6sq_{n-n_0}\ln2+Cs\ln q_{n-n_0}.
\end{equation}
Let $a=\theta_i$,
we obtain
   $$ \sum _{j \in I_1  \cup I_2\atop j\neq i}\ln|\cos2\pi \theta_i-\cos2\pi \theta_j| \;\;\;\;\;\;\;\;\;\;\;\;\;\;\;\;\;\;\;\;\;\;\;\;\;\;\;\;\;\;\;\;\;\;\;\;\;\;\;\;\;\;\;\;\;\;\;\;\;\;\;\;\;\;\;\;$$
$$\;\;\;\;\;\;\;\;\;\;\;\;\;\;\;\;\;\;\;\;\;=\sum_{j \in I_1  \cup I_2\atop j\neq i}\ln|\sin\pi(\theta_i+\theta_j)|+\sum_{j \in I_1  \cup I_2\atop j\neq i}\ln |\sin\pi(\theta_i-\theta_j)|
+(6sq_{n-n_0}-1)\ln2  $$
\begin{equation}\label{G.appsumdenumerate}
    =\Sigma_{+}+\Sigma_-+(6sq_{n-n_0}-1)\ln2,  \;\;\;\;\;\;\;\;\;\;\;\;\;\;\;\;\;\;\;\;\;\;\;\; \;\;\;\;\;\;\;\;\;\;\;\;\;\;\;\;\;\;\;\;\;\;\;\;\;\;\;\;\;\;\;
\end{equation}
 where
  \begin{equation*}%\label{G.appsumdenumerate+}
   \Sigma_{+}=\sum_{j \in I_1  \cup I_2\atop j\neq i}\ln |\sin\pi(2\theta+ (i+j) \alpha)|,
 \end{equation*}
 and
\begin{equation*}%\label{G.appsumdenumerate-}
     \Sigma_-=\sum_{j \in I_1  \cup I_2 \atop j\neq i}\ln |\sin\pi( i-j)\alpha|.
 \end{equation*}
 We will estimate $\Sigma_+ $.
 Set
$J_1=[-2s,-1]$ and
$J_2=[0 ,4s-1 ]$, which are two adjacent disjoint intervals of length
$2s$ and
$4s$ respectively.
 Then $I_1\cup
I_2$ can be represented as a disjoint union of segments $B_j,\;j\in
J_1\cup J_2,$ each of length $q_{n-n_0}$.

Applying (\ref{G927}) to  each  $B_j$, we
obtain
\begin{equation}\label{G312}
\Sigma_+ \geq -6sq_{n-n_0}\ln 2+
\sum_{j\in J_1\cup J_2 }\ln
|\sin  \pi\hat \theta_j|-Cs\ln q_{n-n_0}-\ln|\sin2\pi (\theta+i\alpha)|,
\end{equation}
where
\begin{equation}\label{G313}
|\sin  \pi\hat \theta_j|=\min_{\ell \in B_j}|\sin  \pi
(2\theta +(\ell +  i)\alpha )|.
\end{equation}

 By the  construction of $I_1$ and $I_2$, one has
\begin{equation}\label{G314}
   2\theta +(\ell +  i)\alpha=\pm (mq_n\alpha+r_1\alpha)\;\mod \mathbb{Z}
\end{equation}
or
\begin{equation}\label{G315}
   2\theta +(\ell +  i)\alpha= \pm r_2\alpha \mod \mathbb{Z},
\end{equation}
where $0\leq m\leq C \frac{b_{n+1}}{q_n}$ and $1\leq r_i<q_n$, $i=1,2$.

By (\ref{GDC1}) and  (\ref{GDC2}), it follows
\begin{eqnarray}
% \nonumber to remove numbering (before each equation)
\nonumber
  \min_{\ell \in I_1\cup I_2}\ln| \sin \pi(2\theta +(\ell +  i)\alpha)|&\geq&  C\ln(||r_i\alpha||_{\mathbb{R}/\mathbb{Z}}-\frac{\Delta_{n-1}}{2} ) \\
  \nonumber
    &\geq& C\ln( \Delta_{n-1}-\frac{\Delta_{n-1}}{2}) \\
    &\geq&\ln  C\frac{\Delta_{n-1}}{2}\geq   -C\ln q_n,\label{G316new}
\end{eqnarray}
since $ || mq_n\alpha||_{\mathbb{R}/\mathbb{Z}}\leq C\frac{\eta q_{n+1}  }{q_n} \Delta_n\leq\frac{\Delta_{n-1}}{2}$.

By the construction of $I_1$ and $I_2$,
we also have
\begin{equation}\label{Gsum-}
     \min_{i\neq j\atop i,j\in I_1\cup I_2} \ln |\sin
 \pi (j-i )\alpha) | \geq -C\ln q_n.
\end{equation}

Next we  estimate $\sum_{j\in J_1  }\ln
|\sin  \pi\hat \theta_j|$.
Assume that $\hat \theta_{j+1}=\hat \theta_j+ q_{n-n_0}  \alpha$ for
every $j,j+1 \in J_1$.
In this case, for any $i,j\in J_1$ and $i\neq j$, we have
\begin{equation}\label{APPaddnew1}
    ||\hat\theta_i-\hat\theta_j||_{\mathbb{R}/\mathbb{Z}}\geq  ||q_{n-n_0}\alpha||_{\mathbb{R}/\mathbb{Z}}.
\end{equation}

By   the Stirling formula,  (\ref{G316new}) and (\ref{APPaddnew1}),  one has
 \begin{eqnarray}
 % \nonumber to remove numbering (before each equation)
  \nonumber
   \sum_{j\in J_1}\ln
|\sin 2\pi\hat\theta_j|&>&   2\sum_{j=1}^{s}\ln
( j\Delta_{n-n_0}) -C\ln q_n\\
    &>&  2s\ln\frac s{q_{n-n_0+1}}-C\ln q_n-Cs.\label{G317}
 \end{eqnarray}
 \par
In the other cases, decompose $J_1$ into maximal intervals $T_\kappa$
such that for $j,j+1 \in
T_\kappa$ we have $\hat \theta_{j+1}=\hat \theta_j+ q_{n-n_0}
\alpha$.  Notice that the boundary points of an interval $T_\kappa$
are either boundary points of $J_1$ or satisfy
$\| \hat \theta_j\|_{\mathbb{R}/\mathbb{Z}}+\Delta_{n-n_0} \geq
\frac {\Delta_{n-n_0-1}} {2}$.
This follows from the fact that if
$0<|z|<q_{n-n_0}$,  then $\|  \hat \theta_j+q_{n-n_0}\alpha\|_{\mathbb{R}/\mathbb{Z}}\leq \|  \hat \theta_j\|_{\mathbb{R}/\mathbb{Z}}+\Delta_{n-n_0}$,  and  $\|  \hat \theta_j+(z+q_{n-n_0})\alpha \|_{\mathbb{R}/\mathbb{Z}}\geq   \|z \alpha\|_{\mathbb{R}/ \mathbb{Z}}-\|  \hat \theta_j+q_{n-n_0}\alpha\|_{\mathbb{R}/\mathbb{Z}}
\geq \Delta_{n-n_0-1}-\|  \hat \theta_j\|_{\mathbb{R}/\mathbb{Z}}-\Delta_{n-n_0}$.
Assuming $T_\kappa \neq J_1$, then there exists $j \in
T_\kappa$ such that $\|  \hat \theta_j\|_{\mathbb{R}/\mathbb{Z}}\geq
\frac {\Delta_{n-n_0-1}} {2}- \Delta_{n-n_0} $.
\par
If $T_\kappa$  contains  some $j$ with $ \|  \hat
\theta_j\|_{\mathbb{R}/\mathbb{Z}}<\frac {\Delta_{n-n_0-1}} {10} $, then
 \begin{eqnarray}
 % \nonumber to remove numbering (before each equation)
 \nonumber
   |T_\kappa| &\geq & \frac{\frac {\Delta_{n-n_0-1}} {2}-\Delta_{n-n_0} -\frac {\Delta_{n-n_0-1}} {10}}{\Delta_{n-n_0}}  \\
    &\geq&\frac{1}{4} \frac{\Delta_{n-n_0-1}}{ \Delta_{n-n_0}}-1\geq \frac{s}{8}-1,\label{G318}
 \end{eqnarray}
 since $sq_{n-n_0}\leq q_{n-n_0+1}$,  where $|T_\kappa|=b-a+1$ for $T_\kappa=[a,b]$.
  %This implies there exist at most two intervals $T_\kappa$ that  contain some $j$ with $ \|  \hat
%\theta_j\|_{\mathbb{R}/\mathbb{Z}}<\frac {\Delta_{n-2}} {10} $.
%\par
For such $ T_\kappa$,
 a similar estimate to (\ref{G317}) gives
\begin{eqnarray}
\nonumber
  \sum_{j\in T_\kappa}\ln
|\sin  \pi\hat\theta_j|  &\geq&   |T_\kappa|\ln \frac {|T_\kappa|}{q_{n-n_0+1}} -C s-C\ln q_n  \\
 &\geq&    |T_\kappa|\ln \frac{s}{q_{n-n_0+1}}-Cs-C\ln q_n. \label{G319}
\end{eqnarray}
 If $T_\kappa$ does not   contain any $j$ with $ \|  \hat
\theta_j\|_{\mathbb{R}/\mathbb{Z}}<\frac {\Delta_{n-n_0-1}} {10} $, then by (\ref{GDC2})
\begin{eqnarray}
% \nonumber to remove numbering (before each equation)
\nonumber
  \sum_{j\in T_\kappa}\ln
|\sin  \pi\hat\theta_j|  &\geq&  -|T_\kappa|\ln q_{n-n_0}-C|T_\kappa| \\
    &\geq&    |T_\kappa|\ln \frac{s}{q_{n-n_0+1}}-C|T_\kappa|.\label{G320}
\end{eqnarray}
 \par
By (\ref{G319}) and (\ref{G320}), one has
\begin{equation}\label{G321}
  \sum_{j\in J_1}\ln
|\sin  \pi\hat\theta_j|\geq  2s\ln \frac{s}{q_{n-n_0+1}} -C  s -C\ln q_n.
\end{equation}
Similarly,
\begin{equation}\label{G322}
  \sum_{j\in J_2}\ln
|\sin  \pi\hat\theta_j|\geq  4s\ln \frac{s}{q_{n-n_0+1}} -C  s-C\ln q_n .
\end{equation}
Putting  (\ref{G312}), (\ref{G321}) and (\ref{G322}) together, we have
\begin{equation}\label{G323}
\Sigma_+ > -6sq_{n-n_0}\ln 2+ 6s\ln \frac{s}{q_{n-n_0+1}} -Cs\ln q_{n-n_0}-C\ln q_n.
\end{equation}
Now we start to estimate $ \Sigma_-$.

Replacing (\ref{G316new}) with (\ref{Gsum-}), and following the proof of (\ref{G323}), we obtain,
\begin{equation}\label{newG323}
\Sigma_- > -6sq_{n-n_0}\ln 2+ 6s\ln \frac{s}{q_{n-n_0+1}} -Cs\ln q_{n-n_0}-C\ln q_n.
\end{equation}

By  (\ref{G.appsumdenumerate}), (\ref{G323}) and (\ref{newG323}), we obtain
$$ \sum _{j \in I_1  \cup I_2\atop j\neq i}\ln|\cos2\pi \theta_i-\cos2\pi \theta_j| \;\;\;\;\;\;\;\;\;\;\;\;\;\;\;\;\;\;\;\;\;\;\;\;\;\;\;\;\;\;\;\;\;\;\;\;\;\;\;\;\;\;\;\;\;\;\;\;\;\;\;\;\;\;\;\;$$
\begin{equation}\label{G.app325}
  \geq -6sq_{n-n_0}\ln 2+ 6s\ln \frac{s}{q_{n-n_0+1}} -Cs\ln q_{n-n_0}-C\ln q_n.
\end{equation}
By (\ref{G.appnumerator})  and (\ref{G.app325}),
we have for any $ i\in I_1\cup I_2$,
\begin{equation*}%\label{G43}
         \prod_{j \in I_1\cup I_2\atop j \neq i } \frac{|x-\cos2\pi\theta_{j }|}
        {|\cos2\pi\theta_i-\cos2\pi\theta_{j }|}\leq e^{ 6sq_{n-n_0}(-2\ln (s /q_{n-n_0+1})/q_{n-n_0}+  \varepsilon  )  }.
      \end{equation*}
Using the fact  $4(s+1)q_{n-n_0}>\eta q_{n} -2$, one has
for any $ i\in I_1\cup I_2$,
\begin{equation}\label{G43}
         \prod_{j \in I_1\cup I_2 \atop j \neq i } \frac{|x-\cos2\pi\theta_{j }|}
        {|\cos2\pi\theta_i-\cos2\pi\theta_{j }|}\leq e^{ sq_{n-n_0} \varepsilon    }.
\end{equation}
This implies $La_i\leq \varepsilon sq_{n-n_0}  $ for any $i=1,2,\cdots k+1$, where $k =6sq_{n-n_0}-1$.

     % if  $n $ is large enough.
    Applying  Lemma  \ref{Le.Uniform}, there exists some $j_0$ with  $j_0\in I_1\cup I_2$ such that
    \begin{equation*}
        P_{k-1}(\theta_{j_0}-\frac{k-1}{2}\alpha)\geq e^{ (\ln \lambda-\varepsilon)k }.
    \end{equation*}

      Firstly, we assume $j_0\in I_2$.
      \par
      Set $I=[j_0-3sq_{n-n_0}+1,j_0+3sq_{n-n_0}-1]=[x_1,x_2]$.   By  (\ref{Cramer1}), (\ref{Cramer2}) and (\ref{Numerator}) again,
 one has
\begin{equation*}%\label{AppG.Green}
|G_I(y,x_i)|\leq\exp\{(\ln\lambda+\varepsilon  )(6sq_{n-n_0}-1-|y-x_i|)-6sq_{n-n_0}(\ln\lambda-  \varepsilon) \}.
\end{equation*}
Notice that $|y-x_i|\geq sq_{n-n_0}$,
we obtain
\begin{equation}\label{2AppG.Green}
|G_I(y,x_i)|\leq\exp\{ -(\ln \lambda-  \varepsilon)|y-x_i| \}.
\end{equation}
\par
If $j_0\in I_1$,  we may let $y=0$ in (\ref{2AppG.Green}).
By  (\ref{Block}), we get
\begin{equation*}
    |\phi(0)| \leq 6sq_{n-n_0}\exp\{ -(\ln \lambda-  \varepsilon)sq_{n-n_0}\}.
\end{equation*}
This   contradicts $\phi(0)=1$. Thus $j_0\in I_2$, and the theorem follows from (\ref{2AppG.Green}).
\end{proof}
{\bf Proof of Claim 1}
\begin{proof}
By the construction of $I_1$ and $I_2$ in Claim 1,   \eqref{GDC1} and \eqref{GDC2},
we have  for $i \in I_1$,
\begin{equation}\label{Appenoct1}
     \min_{\ell \in I_1\cup I_2}\ln |\sin \pi(2\theta +(\ell +  i)\alpha)|\geq -C\ln q_n,
\end{equation}
and
\begin{equation}\label{Appenoct2}
     \min_{i\neq j\atop j\in I_1\cup I_2} \ln |\sin
 \pi (j-i )\alpha) | \geq -C\ln q_n.
\end{equation}

Replacing \eqref{G316new} with \eqref{Appenoct1} and \eqref{Gsum-} with \eqref{Appenoct2}, and following the proof of \eqref{G43},
we can show that  for any $i\in I_1$,
\begin{equation*}%\label{G43}
         \prod_{j \in I_1\cup I_2\atop j \neq i } \frac{|x-\cos2\pi\theta_{j }|}
        {|\cos2\pi\theta_i-\cos2\pi\theta_{j }|}\leq e^{   \varepsilon  sq_{n-n_0}  }.
      \end{equation*}
      This implies  for $i\in I_1$, $La_i\leq \varepsilon q_n$.

      By the construction of $I_1$ and $I_2$ in Claim 1,   \eqref{GDC1} and  \eqref{GDC2} again,
we have  for $i \in I_2$,
\begin{equation}\label{Appenoct3}
     \min_{\ell \in I_1\cup I_2}\ln |\sin \pi(2\theta +(\ell +  i)\alpha)|_{\mathbb{R}/\mathbb{Z}}\geq  -\beta q_n-C\ln q_n,
\end{equation}
and
\begin{equation}\label{Appenoct4}
     \min_{i\neq j\atop j\in I_1\cup I_2} \ln |\sin
 \pi (j-i )\alpha) | \geq -C\ln q_n.
\end{equation}
We should mention that, for each $i\in I_2$, there is exact one $ j\in I_1\cup I_2$ such that the lower bound of \eqref{Appenoct3} can be achieved.

Replacing \eqref{G316new} with \eqref{Appenoct3} and \eqref{Gsum-} with \eqref{Appenoct4}, and following the proof of \eqref{G43},
we can show that  for any $i\in I_1$,
\begin{equation*}%\label{G43}
         \prod_{j \in I_1\cup I_2 \atop j \neq i } \frac{|x-\cos2\pi\theta_{j }|}
        {|\cos2\pi\theta_i-\cos2\pi\theta_{j }|}\leq e^{   \varepsilon  sq_{n-n_0} +\beta q_n }.
      \end{equation*}
      This implies for any
$  i\in I_2$, $La_i\leq  q_n (\beta+ \varepsilon) $.

\end{proof}
{\bf Proof of Claim 2}

\begin{proof}
Let $J_3^1=[ j q_n-2sq_{n-n_0},j q_n-1]$ and $J_3^2=[jq_n,+2sq_{n-n_0}-1] $ so that $J_3=J_3^1\cup J_3^2$.
Let $I=J_1\cup J_2\cup J_2$.

{\bf Case 1:} $i \in J_1\cup J_3^1$

By the construction of $J_1$, $J_2$  and $J_3$ in Claim 2, and \eqref{GDC1}, \eqref{GDC2},  we have
\begin{equation}\label{Appenoct5}
     \min_{\ell \in I}\ln |\sin \pi(2\theta +(\ell +  i)\alpha)|\geq -\beta q_n-C\ln q_n,
\end{equation}
and
\begin{equation}\label{Appenoct6}
     \min_{i\neq j\atop j\in I } \ln |\sin
 \pi (j-i )\alpha) | \geq -\beta q_n-C\ln q_n.
\end{equation}
Moreover, there are exact two $\ell,j \in I$ such that the lower bound of \eqref{Appenoct5}  can be achieved  for $\ell$ and the lower bound of \eqref{Appenoct6} can be achieved for $j$.

{\bf Case 2:} $i \in J_1\cup J_3^2$

By the  same reason, we have
\begin{equation}\label{Appenoct7}
     \min_{\ell \in I}\ln |\sin\pi(2\theta +(\ell +  i)\alpha)|\geq -\beta q_n-C\ln q_n,
\end{equation}
and
\begin{equation}\label{Appenoct8}
     \min_{i\neq j\atop j\in I } \ln |\sin
 \pi (j-i )\alpha) | \geq -C\ln q_n.
\end{equation}
Moreover, there are exact two $\ell_1,\ell_2 \in I$ such that the lower bound of \eqref{Appenoct7}  can be achieved  for both $\ell_1$ and $\ell_2$.

{\bf Case 3:} $i \in J_2$

By the  same reason, we have
\begin{equation}\label{Appenoct9}
     \min_{\ell \in I}\ln |\sin\pi(2\theta +(\ell +  i)\alpha)|\geq -\beta q_n-C\ln q_n,
\end{equation}
and
\begin{equation}\label{Appenoct10}
     \min_{i\neq j\atop j\in I } \ln |\sin
 \pi (j-i )\alpha) | \geq -C\ln q_n.
\end{equation}
Moreover, there is exact one $\ell \in I$ such that the lower bound of \eqref{Appenoct9}  can be achieved  for $\ell$.

 Now following the proof of the Claim 1, we can prove Claim 2.

\end{proof}
\section{Proof of Lemma \ref{Le.resonant}}
Without loss of generality, we assume $k\in[jq_n,(j+\frac{1}{2})q_n]$ and $j\geq0$.
Let $d_j=k-jq_n$ and $d_{j+\frac{1}{2}}=(j+\frac{1}{2})q_n-k$.

 For any $y$ $  \in [j q_n+\eta q_{n },(j+\frac{1}{2}) q_n-
\eta q_{n }]$,   by  Theorem \ref{Th.Nonresonant},
  $y$ is regular with $\tau=\ln\lambda -\eta$.
Therefore
 there exists an interval $ I(y)=[x_1,x_2]\subset
[  j  q_n,(j+\frac{1}{2})q_n]$
such that $y\in I(y)$ and
\begin{equation}\label{G329}
    \text{dist}(y,\partial I(y))\geq  \frac{1}{7} |I(y)|   \geq  \frac{q_{n-n_0}}{2}
\end{equation}
and
\begin{equation}\label{G330}
  |G_{I(y)}(y,x_i)| \leq e^{-(\ln \lambda-  \eta)|y-x_i|},\;i=1,2,
\end{equation}
where
 $ \partial I(y)$ is the boundary of the interval $I(y)$, i.e.,$\{x_1,x_2\}$, and  $ |I(y)|$ is the  size of $I(y)\cap\mathbb{Z} $, i.e., $ |I(y)|=x_2-x_1+1$.
   For $z  \in  \partial I(y)$,  let
  $z' $ be the neighbor of $z$, (i.e., $|z-z'|=1$) not belonging to $I(y)$.
\par
If $x_2+1\leq (j+\frac{1}{2})q_n-\eta q_n$ or  $x_1-1\geq  j q_n+ \eta q_n$,
we can expand $\phi(x_2+1)$ or $\phi(x_1-1)$ using (\ref{Block}). We can continue this process until we arrive to $z$
such that $z+1>(j+\frac{1}{2})q_n- \eta q_n$ or  $z-1< j  q_n+ \eta q_n$, or the iterating number reaches
$\lfloor\frac{4 q_n}{ q_{n-n_0}}\rfloor$. Thus, by (\ref{Block})
\begin{equation}\label{G331}
   \phi(k)=\displaystyle\sum_{s ; z_{i+1}\in\partial I(z_i^\prime)}
G_{I(k)}(k,z_1) G_{I(z_1^\prime)}
(z_1^\prime,z_2)\cdots G_{I(z_s^\prime)}
(z_s^\prime,z_{s+1})\phi(z_{s+1}^\prime),
\end{equation}
where in each term of the summation one has
$j q_n+\eta q_{n }+1\leq z_i\leq (j+\frac{1}{2}) q_n-\eta
q_{n }-1$, $i=1,\cdots,s,$ and
  either $z_{s+1} \notin [j q_n+\eta q_{n }+1,(j+\frac{1}{2}) q_n-
 \eta q_{n }-1]$, $s+1 < \lfloor\frac{4 q_n}{ q_{n-n_0}}\rfloor$; or
$s+1= \lfloor\frac{4 q_n}{ q_{n-n_0}}\rfloor$.
We should mention that $z_{s+1}\in[jq_n,(j+\frac{1}{2})q_n] $.
\par
 If $z_{s+1} \in [j q_n,j q_n+ \eta q_{n }]$, $s+1 < \lfloor\frac{4 q_n}{ q_{n-n_0}}\rfloor$,
this implies
\begin{equation*}
    |\phi(z_{s+1}^\prime)|\leq r_j.
\end{equation*}
By  (\ref{G330}), we have
\begin{equation*}
    \nonumber
   | G_{I(k)}(k,z_1) G_{I(z_1^\prime)}
(z_1^\prime,z_2)\cdots G_{I(z_s^\prime)}
(z_s^\prime,z_{s+1})\phi(z_{s+1}^\prime)|
\end{equation*}
\begin{eqnarray}
% \nonumber to remove numbering (before each equation)
\nonumber
&\leq & r_j=e^{-(\ln\lambda-\eta)(|k-z_1|+\sum_{i=1}^{s}|z_i^\prime-z_{i+1}|)}
 \\
\nonumber
&\leq & r_j e^{-(\ln\lambda-\eta)(|k-z_{s+1}|-(s+1))}  \\
&\leq & r_j  e^{-(\ln\lambda- \eta)(d_j- 2\eta q_{n } -4-\frac{ 4q_n}{ q_{n-n_0}})}.
\label{G332}
\end{eqnarray}
%since  $|\phi(z_{s+1}^\prime)|\leq(1+|z_{s+1}^\prime|)\leq e^{C\eta q_n}\max\{ r_j,r_{j+1}\} $.
 If $z_{s+1} \in [(j+\frac{1}{2}) q_n-\eta q_n,(j+\frac{1}{2}) q_n ]$, $s+1 < \lfloor\frac{4 q_n}{ q_{n-n_0}}\rfloor$,
 by the same arguments,
  we have
  \begin{equation}\label{G.addGreen}
    | G_{I(k)}(k,z_1) G_{I(z_1^\prime)}
(z_1^\prime,z_2)\cdots G_{I(z_s^\prime)}
(z_s^\prime,z_{s+1})\phi(z_{s+1}^\prime)|\leq  r_{j+\frac{1}{2}} e^{-(\ln\lambda- \eta)(d_{j+\frac{1}{2}}- 2\eta q_{n } -4-\frac{ 4q_n}{ q_{n-n_0}})}.
  \end{equation}
  If $s+1= \lfloor\frac{4 q_n}{ q_{n-n_0}}\rfloor,$
using   (\ref{G329}) and (\ref{G330}), we obtain
\begin{equation}\label{G333}
     | G_{I(k)}(k,z_1) G_{I(z_1^\prime)}
(z_1^\prime,z_2)\cdots G_{I(z_s^\prime)}
(z_s^\prime,z_{s+1})\phi(z_{s+1}^\prime)|\leq e^{-(\ln\lambda-\eta) {\frac{1}{2}q_{n-n_0}} \lfloor\frac{4 q_n}{ q_{n-n_0}}\rfloor}|\phi(z_{s+1}^\prime)| .
\end{equation}
Notice that the total number of terms in (\ref{G331})
is  at most  $2^{\lfloor\frac{4 q_n}{ q_{n-n_0}}\rfloor}$ and $d_j,d_{j+\frac{1}{2}}\geq 10\eta q_n$.  By (\ref{G332}), (\ref{G.addGreen}) and (\ref{G333}),  we have
\begin{equation}\label{G.add1}
|\phi(k)|\leq  \max\{r_j e^{-(\ln\lambda-\eta) (d_j-3\eta q_n) }, r_{j+\frac{1}{2}} e^{-(\ln\lambda-\eta) (d_{j+\frac{1}{2}}-3\eta q_n) }, e^{-(\ln\lambda-\eta) q_n }\max_{p\in[jq_n,(j+\frac{1}{2})q_n]}|\phi(p)|\}.
\end{equation}
Now we will show that for any $p\in[jq_n,(j+\frac{1}{2})q_n]$, one has
$ |\phi(p)|\leq  \max\{ r_j,r_{j+\frac{1}{2}}\}$. Then (\ref{G.add1}) implies   Lemma \ref{Le.resonant}.
Otherwise, by the definition of $r_j$, if   $|\phi(p^\prime)|$ is the largest one of $|\varphi(z)|,z\in [jq_n+10\eta q_n+1,(j+\frac{1}{2})q_n-10\eta q_n-1]$,
then $|\phi(p^\prime)|>\max\{ r_j,r_{j+\frac{1}{2}}\}$. Applying (\ref{G.add1}) to $\phi(p^\prime) $ and noticing  that  ${\rm dist} (p^\prime,q_n\mathbb{Z})\geq 10\eta q_n$,
we get
\begin{equation*}%\label{G.add2}
|\phi(p^\prime)|\leq   e^{-7(\ln\lambda-\eta)\eta q_n  } \max\{ r_j,r_{j+\frac{1}{2}},|\phi(p^\prime)|\}.
\end{equation*}
This is impossible  because $|\phi(p^\prime)|>\max\{ r_j,r_{j+\frac{1}{2}}\}$.
 \section*{Acknowledgments}
 %The authors are very grateful to the anonymous referees for their knowledgeable reports, which helped us to improve our manuscript.
  I would like to thank Svetlana Jitomirskaya for comments on earlier versions of the manuscript.
   This research was  supported by  the AMS-Simons Travel Grant 2016-2018, NSF DMS-1401204 and NSF DMS-1700314.
The author is  grateful to the Isaac
    Newton Institute for Mathematical Sciences, Cambridge, for its
    hospitality, supported by EPSRC Grant Number EP/K032208/1, during the programme Periodic and Ergodic Spectral
    Problems where this work was started.
 %his work was done when W. Liu and S. Jitomirskaya visited Isaac Newton Institute for Mathematical Sciences in Cambridge.
% We both thank the invitation of organisers of Periodic and Ergodic Spectral Problems.
 %Department of Mathematics, University of California at
  %Irvine for warm hospitality, especially thank Professor Martin Schechter for constant encouragement and helpful discussions.

\footnotesize

\end{document}